\newtheorem{prop}{Proposition}[section]
\newtheorem{thm}[prop]{Theorem}
\newtheorem{lemma}[prop]{Lemma}
\newtheorem{cor}[prop]{Corollary}
\newcommand{\N}{{\mathbb N}}
\newcommand{\Z}{{\mathbb Z}}
\newcommand{\Q}{{\mathbb Q}}
\newcommand{\lam}{\lambda}
\newcommand{\cL}{{\mathcal L}}
\newcommand{\cM}{{\mathcal M}}
\newcommand{\cN}{{\mathcal N}}
\newcommand{\cP}{{\mathcal P}}
\def\pmod#1{\allowbreak\mkern8mu({\rm mod}\,\,#1)}
\def\lrbigparen#1{\bigl( {\, {#1}\, \bigr) } }
\def\lrbigbrack#1{\bigl\{ {\, {#1}\, \bigr\} } }
\newcommand{\lasteq}{\rm{(\theequation)}} %prints previous equation number
\newcounter{abci}\renewcommand{\theabci}{\alph{abci}}
\newenvironment{abclist}{\begin{list}{(\theabci)}
{\usecounter{abci}
\setlength{\labelwidth}{2cm}
\setlength{\topsep}{0ex} 
\setlength{\itemsep}{0ex}
\setlength{\parsep}{0ex}}}
{\end{list}}
\title{Inhomogeneous Diophantine approximation of some Hurwitzian numbers}
\author{Richard T. Bumby \\
 Rutgers, the State University of New Jersey, \\ Department of
  Mathematics, \\ Hill Center, Busch Campus, \\ 
110 Frelinghuysen Road, \\
  Piscataway, NJ 08854-8019, USA , \\
{\tt bumby@math.rutgers.edu} 
\and 
Mary E. Flahive \\
 Department of Mathematics, \\ 
Oregon State University, \\ Corvallis, OR 97331-4605, USA \\
{\tt flahive@math.oregonstate.edu} }
\date{{\bf MSC:} 11J70,11J06, 11H50}
\begin{document}

\bibliographystyle{plain}  
\pagenumbering{arabic}
\addtocounter{secnumdepth}{1}

\maketitle

\begin{abstract} We continue the work of Takao Komatsu, and consider   
the inhomogeneous approximation constant $L(\theta,\phi)$ for 
Hurwitzian $\theta$ and $\phi \in \Q(\theta) +\Q$. 
The current work uses a compactness theorem to relate 
such inhomogeneous constants to the homogeneous approximation constants. 
Among the new results are: a characterization of such 
pairs $\theta,\phi$ for which $L(\theta,\phi)=0$, 
consideration of small values of $n^2 \, L(e^{2/s},\phi)$
for  $\phi=(r \theta +m)/n$, and the proof of a conjecture 
of Komatsu.  
\end{abstract}

\section{Introduction}    The \emph{inhomogeneous approximation constant}
 for a pair of real numbers~$\theta,\phi$ 
(with \mbox{$\phi \notin \Z\theta+ \Z$)} is 
\[ L(\theta, \phi)=
\liminf_{|q| \to \infty}\, \Big\{ |q|\,  \| q \, \theta - \phi \| \,: 
\, q \in \Z \Big\} , \] 
where 
$\|x\|$ denotes the distance from the real number~$x$ to the nearest integer.
Minkowski proved that  when $\theta$ is irrational, 
$ L(\theta, \phi) \le 1/4$ holds for 
all~$\phi$. Grace~\cite{gr1918} 
used regular simple continued fractions to construct $\theta$ with 
\mbox{$ L(\theta, 1/2) = 1/4$.}  Further historical details on these and 
related results can be found in Koksma~\cite{koksma}. 
In the middle of the twentieth century
there was substantial work related to 
these inhomogeneous approximation constants and also to the associated
inhomogeneous Markoff values.  Reference~\cite{cus94} contains 
a good overview of this 
work and has a comprehensive list of references.

In the last decade, interest in these problems was rekindled by the authors 
of~\cite{cus93,cus96, cus94}, 
and continued with articles by Christopher Pinner~\cite{pinner01} and 
Takao Komatsu~\cite{tk1997,tk1999,tk1999a, tk2002}.  In particular, 
Komatsu used  
several different types of continued fractions to compute 
the inhomogeneous constants when  $e^{1/s}$ (for positive integer~$s$) 
is paired with various~$\phi$ in~$\Q \, \theta + \Q$.  
In this article we make use of the ``relative rationality''  
of these pairs $\theta, \phi$ 
to show how the technically simpler ideas of Grace~\cite{gr1918}
and regular simple continued fractions can be used to unify and 
extend Komatsu's results.

Perron~\cite[Section~32]{OP} defines  
an \emph{arithmetic progression of order~$m$} to be a polynomial
of degree~$m$ with rational coefficients that is a function from~$\N$ 
to~$\N$.
The real number~$\theta$ is a \emph{Hurwitzian number of order~$m$}
if there exists a finite number of 
arithmetic progressions $f_1(x), \ldots, f_R(x)$  
of order at most $m$ (and at least one has order~$m$) such that 
\begin{equation*}
 \theta=[b_0;b_1,\ldots, b_n, f_1(1), \ldots, f_R(1),
f_1(2), \ldots, f_R(2),\ldots ] \; .
\end{equation*} 
We use Perron's convenient notation
\begin{equation*}
\theta =[\, b_0;b_1,\ldots, b_n, 
\, \lrbigparen{\, f_1(i), \ldots, f_R(i)}_{_{i=1}}^{^ \infty} \,] \; .
\end{equation*}

Quadratic irrationals are the Hurwitzian numbers of order~0. 
 For a nonzero integer~$k$, \, $e^{2/k}$ and $\tanh(1/k)$ 
are examples of Hurwitzian numbers of order~1. 
In 1714 Roger Cotes found the continued fraction
expansion of $e$:
\begin{equation*}
e= [\, 2;1,2,1,1,4,1,1,6, \ldots \, ]
=[\, 2;\lrbigparen{1 \, , \, 2j \, ,\, 1 }_{\;_{j=1}}^{^\infty}\, ] \; . 
\end{equation*}
 Euler~(1737) proved this is indeed the continued fraction 
of~$e$, and also that for integers~$s \ge 2$,
\begin{equation}\label{E:Euler}
e^{1/s}
=[\,1; \, 
\lrbigparen{ (2j-1)s -1\,  ,\,  1 ,\,  1 \; }_{\; _{j=1}}^{^ \infty} \, ]
\end{equation}
and 
\[ \tanh(1/s)
=[\,0; \, \lrbigparen{ (2j-1)s} _{ _{j=1}}^{^ \infty} \, ] \; .\]
In correspondence with Hermite, Stieljes 
described the continued fraction of $e^{2/k}$ for odd $k$: 
\[e^2=[\, 7;\, \lrbigparen{3j-1,1, 1, 3j,12j+6}_{\,_{j=1}}^{\; ^\infty} \,] 
\; , \]
and  for integers~$s \ge 1$,
\small 
\begin{equation}\label{E:Stieljes}
 e^{2/(2s+1)}=[\, 1 ; \, \lrbigparen{3(2s+1)j+s,6(2s+1)(2j+1), 
3(2s+1)j+5s+2,1, 1}_{\,_{ j=0}} ^{\; ^\infty} \, ]\; .
\end{equation}
\normalsize
\noindent  For references and insight into the proofs, 
we refer the reader 
to~\cite{cohn2006,osler2006,OP}, with an additional 
comment on the continued fraction of \mbox{$\alpha=\coth(1/s)$.} 
Since $\alpha$ is the result of applying a linear fractional 
transformation with integer coefficients to $\beta=e^{2/s}$, an 
algorithm of G. N. Raney~\cite{raney73} 
(also reported in~\cite{benyon83}) can be used to relate the continued 
fractions of $\alpha$ and $\beta$.  
%%In fact, since 
%%\[ \coth(1/s)=1 + 2 \; \frac{1}{e^{2/s}-1} \, ,\]
%%the special case developed by Hurwitz 
%%relating the continued fractions of $\gamma$ and $2 \gamma$ can
%%be used.  
  
Here we restrict to $\phi \in \Q \theta + \Q$ 
(where $\phi \notin \Z \theta + \Z$).  
By definition, $L(\theta,\phi_1)=L(\theta,\phi_2)$  when  
 $\phi_1-\phi_2 \in \Z \, \theta +\Z$, and so it suffices to assume that 
$\phi$ is in \emph{reduced form}:
\begin{equation}\label{E:redform} 
\phi=\frac{r\theta +m}{n}  \quad \mbox{ and } n \ge 2 \; , \; 
\gcd(r,m,n)=1 \; 
\mbox{ and } \; 0 \le r,m < n \, . 
\end{equation}
The integer~$n$ will 
be called the \emph{reduced denominator} of~$\phi$.

\section{Connections with homogeneous approximation}\label{S:connections}
In this section we consider~$\theta$ of the form 
\begin{equation}\label{E:genform}
\begin{aligned}
\theta = &[ c_0; c_1 \ldots, c_{n_1},a_0, c_{n_1+1}, \ldots, c_{n_1+n_2},a_1,
c_{n_1+n_2+1}, \ldots] \, ,   \\ & 
\mbox{where $ \lim_{i \to \infty}{a_i}=\infty $, $n_j \ge 0$, 
  and $\{ c_i \}$ is a bounded sequence}\, .
\end{aligned} 
\end{equation}
We use standard results on simple continued 
fractions that can be found for example 
in~\cite{cusickandflahive89,Lang95,OP,RandS}.  Our 
principal reference \mbox{is~\cite[Chapter 1]{Lang95}.}  

Let $\theta=[b_0;b_1, b_2, \ldots ]$ be the simple continued fraction
of the real number~$\theta$.  For~$i \ge 0$, \; $\cP_i=(p_i,q_i)$ 
is called the $i$-th \emph{convergent} 
of~$\theta$ if $q_i > 0$ and $[b_0;b_1, b_2, \ldots, b_i ]=p_i/q_i$ in   
reduced form. Then $\cP_0=(b_0,1)$, and using $\cP_{-1}=(1,0)$ we have 
\begin{equation}\label{E:cvgtrec}
\cP_{i+1}=b_{i+1} \cP_i+\cP_{i-1} \; 
\mbox{ for all~$i \ge 0$}  \, , 
\end{equation}  
and 
\begin{equation} \label{E:leap}
 q_i \,|\, q_i \theta -p_i\,|=q_i\, \|\, q_i \theta \, \| = 
\mu_i^{-1}, \;  \mbox{where 
$\mu_i:=[b_{i+1};b_{i+2}, \ldots]+[0;b_i, \ldots, b_1]$} \, . 
\end{equation} 
(Refer to Theorem~1 in~\cite[page~2]{Lang95} and 
Corollary~3 in~\cite[page~5]{Lang95}.)

 For~$\theta$ of the form in~(\ref{E:genform}), 
 the subscripts~$I_j$ 
for which  $b_{_{I_j+1}}=a_j$ will be referred to as 
\emph{leaping subscripts}  with 
associated \emph{leapers} 
\mbox{$\cL_j=(p_{_{I_j}},q_{_{I_j}})$}. The name is appropriate 
since from~(\ref{E:leap}) the rational number given by 
a leaper yields a 
very efficient rational approximation to~$\theta$ as compared with 
the approximations using earlier convergents. This terminology was used 
 by Komatsu in~\cite{tk2003} in a slightly different 
context.

\begin{thm}\label{T:coarse}
Let $\phi=(r \theta+m)/n$ 
be in reduced form. If there exists an integer $g$ such 
that 
\begin{equation} \label{E:congr}
g \cP_i \equiv (m, -r) \pmod{n}
\end{equation} holds for infinitely many 
convergents of~$\theta$ then  
\[ n^2 \, L(\theta, \phi) \le 
g^2 \lrbigparen{\limsup_{i \to \infty} \{ \mu_i \, : \, 
g \cP_{i} \equiv (m, -r) \pmod{n} \, \}} ^{-1} \; .  \]
Moreover, if (\ref{E:congr}) holds for infinitely many leapers, 
then $ L(\theta, \phi)=0$.
\end{thm}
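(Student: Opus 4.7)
The plan is to turn the congruence hypothesis into an explicit infinite family of integer denominators $q$ along which the approximation quality $|q|\,\|q\theta-\phi\|$ can be expressed directly in terms of the classical quantity $q_i|q_i\theta-p_i|=\mu_i^{-1}$ given by~(\ref{E:leap}). No other tools are really needed beyond straightforward algebra.

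First I would unpack~(\ref{E:congr}) as $gp_i-m=nP_i$ and $gq_i+r=nQ_i$ for integers $P_i,Q_i$, and set $q=Q_i$. A direct calculation gives
\[
q\theta-\phi\;=\;Q_i\theta-\frac{r\theta+m}{n}\;=\;\frac{(nQ_i-r)\theta-m}{n}\;=\;\frac{gq_i\theta-m}{n}\;=\;P_i+\frac{g(q_i\theta-p_i)}{n},
\]
using $nQ_i-r=gq_i$ and $gp_i-m=nP_i$. Since $P_i\in\Z$, this yields the nearest-integer bound $\|Q_i\theta-\phi\|\le(|g|/n)\,|q_i\theta-p_i|$. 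Combining with $|Q_i|\le(|g|q_i+r)/n$ and~(\ref{E:leap}),
\[
|Q_i|\,\|Q_i\theta-\phi\|\;\le\;\frac{g^{2}}{n^{2}}\,q_i|q_i\theta-p_i|\;+\;\frac{|g|\,r}{n^{2}}\,|q_i\theta-p_i|\;=\;\frac{g^{2}}{n^{2}\mu_i}+o(1),
\]
the error term vanishing because $|q_i\theta-p_i|\to 0$.

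Next I would note that $g\ne 0$ (otherwise the congruence forces $r=m=0$, contradicting the reduced form condition $\gcd(r,m,n)=1$ with $n\ge 2$), so that $|Q_i|\to\infty$ along the subsequence of indices satisfying~(\ref{E:congr}). Therefore $Q_i$ is a legitimate test sequence for the liminf defining $L(\theta,\phi)$, and taking the liminf over this subsequence gives
\[
L(\theta,\phi)\;\le\;\liminf_{i:\,g\cP_i\equiv(m,-r)\!\pmod n}\frac{g^{2}}{n^{2}\mu_i}\;=\;\frac{g^{2}}{n^{2}\,\limsup\mu_i},
\]
which is the first assertion. For the second assertion, if (\ref{E:congr}) holds for infinitely many leapers $\cL_j$, then (\ref{E:leap}) gives $\mu_{I_j}\ge b_{I_j+1}=a_j\to\infty$, so the relevant $\limsup$ is infinite and the first part immediately yields $L(\theta,\phi)=0$.

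The proof is essentially bookkeeping; the only real point of care is verifying that the arithmetic progression of $Q_i$'s actually diverges so that it contributes to the liminf, which is handled by the observation that $g\ne 0$. Everything else is a direct substitution into~(\ref{E:leap}).
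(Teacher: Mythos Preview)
Your proof is correct and follows essentially the same route as the paper's: both unpack the congruence as $g\cP_i=(m+nR,\,nS-r)$ and reduce the estimate for $|S|\,\|S\theta-\phi\|$ to the identity $q_i|q_i\theta-p_i|=\mu_i^{-1}$. The only cosmetic difference is that the paper obtains the exact equality $n^2\,|S_j-r/n|\,|S_j\theta-\phi-R_j|=g^2/\mu_{i_j}$ and then absorbs the shift $r/n$ in the liminf, whereas you absorb it as an explicit $o(1)$ term; these are equivalent bookkeeping choices.
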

\begin{proof} Let $\{i_j\}$ be the infinite sequence for which 
$g\cP_{i_j} \equiv (m, -r) \pmod{n}$.  Then for each $j$ there exist
integers $R_j,S_j$ such that $g \cP_{i_j}=(m+nR_j,nS_j-r)$;
\begin{align*}
 n^2|(S_j-r/n)(S_j \theta - \phi -R_j)| &=
|nS_j-r|\,|(nS_j-r) \theta -(m+R_jn)| \\
&=g^2 |q_{i_j}| |q_{i_j} \theta -p_{i_j} | \, ;
\end{align*} 
by (\ref{E:leap}),  
\begin{equation}\label{E:lam}
n^2|(S_j-r/n)(S_j \theta - \phi -R_j)| = g^2/\mu_{i_j} \, . 
\end{equation} 
Therefore, 
\begin{align*}
n^2 L(\theta,\phi)&=
n^2 \liminf_{|q| \to \infty}\, \{ |q|\,  \| q \, \theta - \phi \| \,: 
\, q \in \Z \}\\
 &\le n^2 \liminf_{j \to \infty} \{ |S_j|\,| S_j\, \theta - \phi -R_j| \, \} \\
 &= n^2 \liminf_{j \to \infty} \{ |S_j -r/n|\,  
| S_j \, \theta - \phi -R_j| \,\} \\
&=g^2 \, \liminf_{j \to \infty} \{ 1/\mu_{i_j}\, \}\, .
\end{align*}  Since (\ref{E:congr}) is a congruence modulo~$n$, 
we may assume $1 \le g < n$, giving 
\[L(\theta,\phi) \le  \liminf_{j \to \infty} \{ 1/\mu_{i_j} \} 
 \le  \liminf_{j \to \infty} \{ 1/b_{i_j} \} \, . \]
Therefore,  $L(\theta,\phi)=0$
when there are infinitely many leapers satisfying~(\ref{E:congr}).  
 \end{proof}

Theorem~\ref{T:coarse} was implicit in Grace's work~\cite{gr1918}. 
We illustrate its usefulness by proving that for any integer $k \ge 3$,  
\; $L(e^{2/k},(e^{2/k}+1)/2)=0$.  This was 
proved by Komatsu for even~$k$ in~\cite[Theorem 3.1]{tk1999}.
 From~(\ref{E:Euler}) and (\ref{E:Stieljes}), we note that
the sequence of convergents for~$e^{2/k}$ is completely periodic~modulo~2.
In fact, for odd $k=2s+1$, the modulo~2 sequence of convergents of
$e^{2/k}$ has period 
\begin{equation}\label{E:introper--odd} 
(1,1), (s+1,s), (1,1), (0,1), (1,0),(1,1), (s,s+1), (1,1), (0,1), (1,0) \, ,
\end{equation} 
where the leapers are congruent to $(1,1), (s+1,s), (s,s+1)$ modulo~$2$.  
Since these
are all of the congruence classes modulo~2, \, $L(e^{2/k},\phi)=0$ for 
all $\phi$ whose reduced denominator is~2.  On the other hand,
for even $k=2s$ the modulo~2 period for the convergents 
of $\theta=e^{1/s}$ is 
\begin{equation}\label{E:introper} 
(1,1), (s,s+1), (s+1,s), (1,1), (0,1), (1,0) \, , 
\end{equation}
where every leaper is congruent to $(1,1) \pmod{2}$.  Again
$L(\theta, \phi)=0$ for \mbox{$\phi= (\theta+1)/2$.}  But since
$\lim_{i \to \infty}{ \mu_{6i+4}}=\lim_{i \to \infty}{ \mu_{6i+5}}=2$, 
applying Theorem~\ref{T:coarse} with $g=1$ gives $L(e^{1/s}, \phi) \le 1/8$  
for each of $\phi=1/2, -\theta/2$.

\medskip 
\begin{lemma}\label{L:lem} Let $\theta=[b_0;b_1, b_2, \ldots ]$ be irrational  
and $ \phi=(r\theta +m)/n$ be in reduced form. 
For any nonzero integer~$S$, set 
\[\lam(S):= \Bigl| S-\frac{r}{n} \Bigr| \, \|S \theta -\phi \| \, ,  \]
and let $R$ be the nearest integer to $S\theta - \phi$.  
If $0 <n^2 \, \lam(S) <1$ 
then there exist integers $i, g$ with $g$ invertible modulo~$n$ 
such that either  
\begin{equation}\label{E:1alt} (m+Rn, Sn-r)=g \cP_i  
\quad \mbox{and} \quad 
n^2 \lam(S)=\frac{g^2}{\mu_i} 
\end{equation}
or \; 
\begin{equation} \label{E:2alt}
b_{i+1} \ne 1 \; \mbox{ and } \; 
n^2 \lam(S) \ge g^2(1 - w) \; \; 
\mbox{for some  $0 \le w \le [0;b_{i+1}]$} \, .
\end{equation}
Moreover, if $n^2 \, \lam(S) <1/2$, then~(\ref{E:1alt}) must hold.  
\end{lemma}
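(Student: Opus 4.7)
The plan is to translate the inhomogeneous quantity $\lam(S)$ into a homogeneous approximation problem. Set $p := m + Rn$ and $q := Sn - r$, so that $q\theta - p = n(S\theta - \phi - R)$ and therefore
\[
|q|\,|q\theta - p| \;=\; n\,|S - r/n|\cdot n\,\|S\theta - \phi\| \;=\; n^2\,\lam(S) \;<\; 1.
\]
Let $g$ be the greatest common divisor of $p$ and $q$, with sign chosen so that $q/g > 0$, and write $(p, q) = g\,(p_0, q_0)$ in lowest terms with $q_0 > 0$. A short arithmetic check shows $\gcd(g, n) = 1$: reducing the definitions modulo $n$ gives $p \equiv m$ and $q \equiv -r$, so any prime dividing $\gcd(g, n)$ would divide each of $m$, $r$, and $n$, contradicting $\gcd(r, m, n) = 1$. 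Thus $g$ is invertible modulo $n$, as required in both alternatives.

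Rewriting the hypothesis gives
\[
q_0\,|q_0\theta - p_0| \;=\; \frac{n^2\,\lam(S)}{g^2} \;\le\; \frac{n^2\,\lam(S)}{1},
\]
and the remainder of the argument studies this primitive approximation. When $q_0\,|q_0\theta - p_0| < 1/2$---which holds whenever either $|g| \ge 2$ (since then $1/g^2 \le 1/4$) or $n^2\,\lam(S) < 1/2$ outright---Legendre's theorem forces $(p_0, q_0) = \cP_i$ for some $i$, and (\ref{E:leap}) then gives $q_0\,|q_0\theta - p_0| = 1/\mu_i$, so $n^2\,\lam(S) = g^2/\mu_i$. This is precisely~(\ref{E:1alt}), and simultaneously handles the ``moreover'' clause and case (a) whenever $|g| \ge 2$.

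The remaining case is $|g| = 1$ with $1/2 \le n^2\,\lam(S) < 1$. Here $q_0\,|q_0\theta - p_0|$ lies above the Legendre threshold, so $p_0/q_0$ need not itself be a convergent; however, by the Fatou-type refinement of Legendre's theorem, $p_0/q_0$ must either be a convergent $\cP_i$ (still giving~(\ref{E:1alt})) or an intermediate fraction of the form $\cP_{i-1} + k\,\cP_i$ with $1 \le k < b_{i+1}$. The latter requires $b_{i+1} \ge 2$, accounting for the hypothesis $b_{i+1} \ne 1$ in~(\ref{E:2alt}). In the semi-convergent case, expanding $q_0\theta - p_0$ by linearity and using the alternation of signs together with the identity $|q_j\theta - p_j| = 1/\mu_j$ from~(\ref{E:leap}) yields
\[
q_0\,|q_0\theta - p_0| \;=\; (q_{i-1} + k\,q_i)\!\left(\frac{1}{\mu_{i-1}} - \frac{k}{\mu_i}\right),
\]
and a short manipulation using the $\mu$-recurrence rewrites this as $1 - w$ for an explicit $w = w(k, b_{i+1})$ which, once the bound $k < b_{i+1}$ is used, satisfies $0 \le w \le 1/b_{i+1} = [0; b_{i+1}]$. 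This produces~(\ref{E:2alt}).

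The main obstacle will be the final semi-convergent bookkeeping: extracting the precise form $q_0\,|q_0\theta - p_0| = 1 - w$ with $w$ lying in the narrow range $[0, [0; b_{i+1}]]$ from the explicit expression above requires careful manipulation of the recurrences relating $q_{i-1}, q_i$ and $\mu_{i-1}, \mu_i, b_{i+1}$. The reduction to a primitive pair, the coprimality of $g$ to $n$, and the Legendre branch are all routine; the substance of the lemma lies in the Fatou classification and the semi-convergent estimate.
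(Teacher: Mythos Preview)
Your overall architecture matches the paper: reduce to the primitive pair $(p_0,q_0)=(M,N)/g$, show $\gcd(g,n)=1$, invoke Legendre for the $<1/2$ branch, and fall back on a Fatou-type classification for the remaining case. The paper proceeds in exactly this way.

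There are, however, two genuine problems in the semi-convergent branch, which you yourself flag as the crux.

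First, your statement of the Fatou refinement is too weak. The theorem you need (Theorem~10 in \cite[p.~16]{Lang95}) does \emph{not} merely say that $p_0/q_0$ is an intermediate fraction $\cP_{i-1}+k\cP_i$ for some $1\le k<b_{i+1}$; it says specifically that $k\in\{1,\,b_{i+1}-1\}$, i.e.\ $(p_0,q_0)=\cP_i\pm\cP_{i-1}$ up to a shift in $i$. This matters: for a ``middle'' $k\approx b_{i+1}/2$ one has
\[
q_0\,|q_0\theta-p_0|\;\approx\;\frac{k(b_{i+1}-k)}{\mu_i}\;\approx\;\frac{b_{i+1}}{4},
\]
which for large $b_{i+1}$ is far above $1$, so such $k$ never satisfy the hypothesis and certainly cannot be written as $1-w$ with $0\le w\le[0;b_{i+1}]$. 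Your claim that ``once the bound $k<b_{i+1}$ is used'' the estimate follows is therefore false as stated.

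Second, your displayed identity is incorrect: since $|q_j\theta-p_j|=1/(q_j\mu_j)$, the expansion should read
\[
q_0\,|q_0\theta-p_0|=(q_{i-1}+kq_i)\!\left(\frac{1}{q_{i-1}\mu_{i-1}}-\frac{k}{q_i\mu_i}\right),
\]
not $(q_{i-1}+kq_i)(1/\mu_{i-1}-k/\mu_i)$. With the correct Fatou output $k\in\{1,b_{i+1}-1\}$, the paper recasts both cases as $(p_0,q_0)=\cP_j\pm\cP_{j-1}$ and computes explicitly
\[
q_0\,|q_0\theta-p_0|=\frac{(1\pm x)(1\mp y)}{1+xy},\qquad x=[0;b_j,\ldots,b_1],\ y=[0;b_{j+1},b_{j+2},\ldots],
\]
from which the lower bound $1-w$ with $w\in\{x,y\}\le[0;b_{i+1}]$ is immediate. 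That explicit computation is the substance you elided, and it does not go through without first restricting $k$.
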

\begin{proof} Define the integers $M:=m+Rn$ and $N:=Sn-r$.  Then  
calculation gives 
\begin{equation*}|N| \,  |N\theta -M|=n^2 \lam(S)\, .
\end{equation*} Since $0< n^2 \lam(S) < 1$, then 
$N \ne 0$ and $M/N$ is a rational that satisfies 
\begin{equation*} \Bigl| \theta -\frac{M}{N} \Bigr| < \frac{1}{N^2} \, .
\end{equation*} By Theorem 10 
in~\cite[page~16]{Lang95},  there exist integers $i,g$ such that 
either $(M,N)=g\cP_i$ or $b_{i+1} \ne 1$ and 
$(M,N)=g(d\cP_i+\cP_{i-1})$ where 
$d$ equals~$1$ or $b_{i+1}-1$.  In either case, $\gcd(g,n)$ must 
divide both $M$ and $N$, and so each of $r,m$.  
The fact that $\phi$ is reduced therefore
implies $g$ is invertible modulo~$n$.  
In addition, 
 by  Corollary~2 in~\cite[page~11]{Lang95}, if $n^2 \, \lam(S) < 1/2$
then  $(M,N)=g\cP_i$.  Also, if $(M,N)=g \cP_i$ then (\ref{E:lam}) yields 
$n^2 \lam(S)=g^2/\mu_i$, which is~(\ref{E:1alt}). It remains to prove 
$(M,N)=g(d\cP_i+\cP_{i-1})$ implies~(\ref{E:2alt}).

We note that $d=b_{i+1}-1$ gives 
$d \, \cP_i+\cP_{i-1} =\cP_{i+1} - \cP_i$, and so  
the two possibilities can be combined as $(M,N)=g \,(\cP_j \pm \cP_{j-1})$ 
for $j=i,i+1$ where the upper sign is taken when $j=i$ and the lower 
sign when $j=i+1$.  Therefore, 
\begin{align*} 
n^2 \lam(S)&=|N| \, |N \, \theta -M| \\
&=g^2 (q_{j} \pm q_{j-1}) \, 
|(q_{j} \pm q_{j-1})\, \theta -(p_{j} \pm p_{j-1})| \\
&=g^2 (q_{j} \pm q_{j-1}) \, 
|(q_{j-1}\, \theta -p_{j-1})\pm (q_{j}\, \theta -p_{j})| \\
&=g^2 (q_{j} \pm q_{j-1})
\Bigl(\|q_{j-1}\, \theta \| \mp  \|q_{j}\, \theta \|\Bigr) 
\, ,
\end{align*}
since the differences $q_{k}\, \theta -p_{k}$ 
alternate in sign.  Then~(\ref{E:leap}) implies   
\begin{equation}
n^2 \lam(S)
=g^2\Bigl( 1 \pm \frac{q_{j-1}}{q_{j}} \Bigr) \, 
\Bigl(\frac{q_{j}}{q_{j-1}} \frac{1}{\mu_{j-1}} \, \mp
 \, \frac{1}{\mu_{j}} \Bigr) \, .
\end{equation}   
 For $x:=[0;b_j \ldots, b_1]$ and 
$y:=[0;b_{j+1},b_{j+2}, \ldots]$, 
\[ \mu_{j-1} =
 \frac{1}{x}+y \quad 
\mbox{,} \quad \mu_{j} = \frac{1}{y}+x \, ,   \]
and $q_{j-1}/q_j=x$ by  Theorem~4 in~\cite[page~6]{Lang95}.
Putting these
into \lasteq \, yields
\[ n^2 \lam(S)=g^2(1 \pm x)\Bigl(\frac{1}{1+xy} \mp \frac{y}{1+xy} \Bigr)
=g^2 \frac{(1 \pm x)(1 \mp y)}{1+xy} \, \]
where $0 \le x \le [0;b_j]$ and $0 < y < [0;b_{j+1}]$.  
When the upper sign holds (that is, \mbox{when~$j=i$),} 
\mbox{$x \ge 0$} and $ y\le 1 $ yield $ n^2 \lam(S) \ge g^2(1 - y)$ 
and $w=y$ satisfies conclusion~(\ref{E:2alt}).  Analogously, 
$w=x$ can be used for the lower sign.  
\end{proof}

\begin{thm}\label{T:basic}  
Let $\theta$ be as 
in~(\ref{E:genform})  and $\phi=(r \theta+m)/n$ be in 
reduced form. 
Then $L(\theta, \phi)=0$ if and only if there exist infinitely many 
leapers $\cL_j$ such that  
$g_j \, \cL_j \equiv (m, -r) \pmod{n}$
for an integer $ g_j $ that is invertible~modulo~${n}$.
\end{thm}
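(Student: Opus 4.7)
The plan is to prove the two directions separately. The ``if'' implication will follow quickly from Theorem~\ref{T:coarse}, while the ``only if'' implication will combine Lemma~\ref{L:lem} with the structural form~(\ref{E:genform}) of~$\theta$.

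For the ``if'' direction, I would normalize each $g_j$ to lie in $\{1,\dots,n-1\}$, so that the pigeonhole principle produces a single~$g$ with $g\cL_j \equiv (m,-r)\pmod{n}$ for infinitely many leapers $\cL_j$. The final assertion of Theorem~\ref{T:coarse} then immediately yields $L(\theta,\phi)=0$.

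For the ``only if'' direction, starting from $L(\theta,\phi)=0$ I would choose integers $S_k$ with $|S_k|\to\infty$ and $|S_k|\,\|S_k\theta-\phi\|\to 0$. Since $|S_k-r/n|$ and $|S_k|$ differ by a bounded amount, $n^2\lam(S_k)\to 0$, so for all large~$k$ the stronger alternative~(\ref{E:1alt}) of Lemma~\ref{L:lem} applies. This produces indices $i_k$ and integers $g_k\in\{1,\dots,n-1\}$ invertible modulo~$n$ with $g_k\cP_{i_k}\equiv(m,-r)\pmod{n}$ and $n^2\lam(S_k)=g_k^2/\mu_{i_k}$.

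The main obstacle is then to show that $i_k$ is a leaping subscript for all but finitely many~$k$. Since $g_k \le n-1$, the identity $n^2\lam(S_k)=g_k^2/\mu_{i_k}$ forces $\mu_{i_k}\to\infty$; in particular $i_k\to\infty$, as otherwise the $\mu_{i_k}$ would take only finitely many values and remain bounded. The elementary estimate $\mu_i \le b_{i+1}+2$, read off from the definition $\mu_i=[b_{i+1};b_{i+2},\dots]+[0;b_i,\dots,b_1]$, then gives $b_{i_k+1}\to\infty$. Because~(\ref{E:genform}) places all partial quotients at non-leaping positions inside the bounded sequence $\{c_i\}$, this growth is possible only if $i_k$ is eventually a leaping subscript. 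The resulting infinite collection of distinct leapers $\cP_{i_k}$ then satisfies the required congruence with its invertible multipliers $g_k$, completing the proof.
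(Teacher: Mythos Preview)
Your proof is correct and follows the same route as the paper: the ``if'' direction via Theorem~\ref{T:coarse} (with your pigeonhole reduction to a single~$g$ making explicit what the paper leaves tacit) and the ``only if'' direction via alternative~(\ref{E:1alt}) of Lemma~\ref{L:lem}, the estimate $\mu_i\le b_{i+1}+2$, and the boundedness of~$\{c_i\}$. One small slip: Lemma~\ref{L:lem} does not guarantee $g_k\in\{1,\dots,n-1\}$, only that $g_k$ is invertible modulo~$n$; this is harmless, since $g_k^2\ge 1$ already forces $\mu_{i_k}=g_k^2/(n^2\lam(S_k))\to\infty$.
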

\begin{proof} 
%%When infinitely many leapers~$\cL_{j}$ 
%%have the property that there
%%exists invertible $g_j$ such that 
%%$g_j \, \cL_{j} \equiv (m, -r) \pmod{n}$,  we may assume each 
%%$1 \le g_j <n$ and then Theorem~\ref{T:coarse} implies  
%%\[ L(\theta,\phi) \le 
%%\lim_{j \to \infty}\frac{g_j^2}{n^2 \mu_{I_{_{j}}}}
%%\le \lim_{j \to \infty}\frac{1}{\mu_{I_{_{j}}}} = 0 \,.  \]
%
%
Let $\{S_k\}$ be an infinite sequence of nonzero integers such that
\[L(\theta, \phi) 
=\lim_{k \to \infty} \, |S_k| \, \| S_k \, \theta - \phi \| \, . \]
If $L(\theta, \phi)=0$, then 
\[ 0=L(\theta, \phi) =\lim_{k \to \infty} 
\, \Bigl|S_k -\frac{r}{n} \Bigr| \, \| S_k \, \theta - \phi \|
=\lim_{k \to \infty} {\lam(S_k)} \, \, . \] 
Restricting to $k$ satisfying $n^2 \, \lam(S_k) <1/2$,  
for each such $k$ Lemma~\ref{L:lem} 
implies there exist $i_k$ and invertible $g_k$ modulo~$n$ such that
(\ref{E:1alt}) holds.  Then 
\[ b_{i_k+1} +2 \ge \mu_{i_k} =
\frac{g_k^2}{n^2 \, \lam(S_k)} \longrightarrow \infty \, . \]
The condition on $\{c_i\}$ in~(\ref{E:genform}) implies  
$i_k$ is a leaping subscript for sufficiently 
large~$k$.

The converse was proved in  Theorem~\ref{T:coarse}
\end{proof}
\begin{cor}\label{C:all} Let $\theta$ be as 
in~(\ref{E:genform})  and $\phi=(r \theta+m)/n$ be in 
reduced form.  If 
$L(\theta, \phi)=0$ then $L(\theta, g \, \phi)=0$
for every integer $g$ that is not a multiple of $n$. 
 In particular, for any $n \ge 2$ there exists $m/n$ such 
that $L(\theta, m/n)=0$ if and only if $L(\theta, m_1/n)=0$ 
for all $m_1 \in \Z$, $m_1/n \notin \Z$.  
\end{cor}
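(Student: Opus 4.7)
The plan is to reduce both assertions to the leaper characterization in Theorem~\ref{T:basic}. From $L(\theta,\phi)=0$, that theorem supplies infinitely many leapers $\cL_j$ and integers $g_j$ invertible modulo~$n$ satisfying $g_j\,\cL_j\equiv(m,-r)\pmod{n}$. Since only finitely many residues modulo~$n$ are possible, I would pass, by pigeonhole, to an infinite subsequence on which $g_j\equiv g_0\pmod{n}$ for a single integer $g_0$. Multiplying by~$g$ then yields $(gg_0)\,\cL_j\equiv(gm,-gr)\pmod{n}$ for infinitely many leapers.

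Next I would apply the computation from the proof of Theorem~\ref{T:coarse} to the pair $\theta,\ g\phi=(gr\theta+gm)/n$ with fixed scalar $gg_0$. The key observation is that reducedness of $\phi$ is not actually used in obtaining the $L=0$ conclusion: writing $(gg_0)\cL_j=(gm+n\tilde R_j,\,n\tilde S_j-gr)$, the formal identity $n^2|(\tilde S_j-gr/n)(\tilde S_j\theta-g\phi-\tilde R_j)|=(gg_0)^2/\mu_{i_j}$ holds for any $\phi=(r\theta+m)/n$, and since $\mu_{i_j}\to\infty$ along leapers one concludes $L(\theta,g\phi)=0$. One must separately verify that $g\phi\notin\Z\theta+\Z$, so that the inhomogeneous constant is defined: setting $\delta=\gcd(r,m)$, reducedness of $\phi$ gives $\gcd(\delta,n)=1$, hence $g\phi\in\Z\theta+\Z$ would force $n\mid g\delta$ and then $n\mid g$, contradicting the hypothesis on~$g$.

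The ``in particular'' statement is then a direct consequence. The direction $(\Leftarrow)$ is trivial. For $(\Rightarrow)$, suppose $\phi=m/n$ is in reduced form---i.e., $r=0$ and $\gcd(m,n)=1$---with $L(\theta,m/n)=0$, and apply the main part with $g$ ranging over integers not divisible by~$n$. Since $m$ is a unit modulo~$n$, the residues $gm\bmod n$ exhaust $\{1,\ldots,n-1\}$, so every $m_1\in\Z$ with $n\nmid m_1$ satisfies $m_1\equiv gm\pmod{n}$ for some admissible $g$ and therefore $L(\theta,m_1/n)=L(\theta,gm/n)=0$.

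The main obstacle I anticipate is confirming that the proof of Theorem~\ref{T:coarse}'s ``moreover'' clause really does go through for a non-reduced $\phi$. This requires checking that the integers $\tilde R_j,\tilde S_j$ produced from the congruence are well-defined (immediate from $(gg_0)\cL_j\equiv(gm,-gr)\pmod{n}$) and that $|\tilde S_j|\to\infty$ (which holds because $\tilde S_j=(gg_0 q_{i_j}+gr)/n$ and $q_{i_j}\to\infty$ along leapers); once this bookkeeping is in place, the rest is the same liminf computation already present in Theorem~\ref{T:coarse}.
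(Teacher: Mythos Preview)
Your argument is correct, and the ``in particular'' deduction is handled cleanly. The route differs slightly from the paper's. After obtaining $g_k g\,\cL_{j_k}\equiv(gm,-gr)\pmod{n}$, the paper does not revisit the proof of Theorem~\ref{T:coarse}; instead it passes to the reduced form of $g\phi$. Writing $d=\gcd(g,n)$ and $h=g/d$, one has $g\phi=(hr\theta+hm)/(n/d)$ with $\gcd(hr,hm,n/d)=1$ and $n/d\ge 2$ (since $n\nmid g$); reducing the congruence modulo $n/d$ gives $g_k h\,\cL_{j_k}\equiv(hm,-hr)\pmod{n/d}$ with $g_k h$ invertible modulo $n/d$, so Theorem~\ref{T:basic} applies directly to the reduced pair. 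This avoids your pigeonhole step (which is harmless but unnecessary, since the $g_k$ are already bounded) and the need to re-examine whether reducedness is used in Theorem~\ref{T:coarse}. Your approach, on the other hand, has the virtue of making explicit that the $L=0$ conclusion of Theorem~\ref{T:coarse} is insensitive to whether $\phi$ is written in reduced form; this is a useful observation in its own right.
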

\begin{proof} Let  $g$ be an integer that is not a multiple of~$n$. 
By Theorem~\ref{T:basic}, 
$L(\theta,\phi)=0$ implies there exist 
infinitely many leapers~$\cL_{j_k}$ such that 
$g_k \, \cL_{j_k} \equiv (m,-r) \pmod{n}$ for some invertible 
$g_k \pmod{n}$, and so 
\begin{equation*}  g_k \, g \, \cL_{j_k} \equiv (gm,-gr) \pmod{n} \,  \, 
\mbox{ for all } k \; .  
\end{equation*}
Setting $d:=\gcd(g,n)$ and $h:=g/d$ this implies 
\[ g_k \, h \, \cL_{j_k} \equiv  (hm,-hr) \pmod{n/d} \,  \, 
\mbox{ for all } k \; .\]
Since $g_kh$ is invertible modulo~$n/d$, from Theorem~\ref{T:basic} 
we obtain \mbox{$L(\theta, g \, \phi)=0$.}
\end{proof}

Henceforth, we'll restrict consideration to a slight generalization 
of~$e^{2/k}$; namely, 
\begin{equation}\label{E:genform2} 
\begin{aligned}
\theta = [a_0;& c_1, \ldots, c_{n_1},a_1,c_{n_1+1}, 
\ldots, c_{n_1+n_2},a_2, \ldots] \, ,  
\mbox{where $\lim_{i \to \infty}{a_i}=\infty$} \\ 
& \mbox{and either~$\{ c_i \}$ is a finite sequence 
or $\limsup_{i \to \infty}{c_i}=1$}\, . 
\end{aligned}
\end{equation}

\begin{thm}\label{T:small} 
Let $\theta=[b_0;b_1,b_2, \ldots]$ be as 
in~(\ref{E:genform2}) and $\phi=(r \theta+m)/n$ be in 
reduced form.  If  $0 < n^2 \, L(\theta, \phi) < 1$,  
then there exist infinitely many 
non-leaping convergents~$\cP_i \equiv (m, -r) \pmod{n}$, and 
\begin{equation}\label{E:positivevalue} 
n^2 \, L(\theta, \phi) = 
\lrbigparen{\limsup_{i \to \infty} \{ \mu_i \, : \, 
\cP_{i} \equiv (m, -r) \pmod{n} \, \}}^{-1} \; . 
\end{equation}
\end{thm}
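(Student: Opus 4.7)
The plan is to apply Lemma~\ref{L:lem} to a sequence $\{S_k\}$ of nonzero integers realizing the liminf defining $L(\theta,\phi)$, to rule out its secondary conclusion~(\ref{E:2alt}) using the structural hypothesis~(\ref{E:genform2}), and then to extract from~(\ref{E:1alt}) the promised congruent convergents. Since $L(\theta,\phi)>0$ forces $|S_k|\to\infty$ while $\|S_k\theta-\phi\|$ stays bounded, one has $\lam(S_k)\to L(\theta,\phi)$; because $n^2L<1$, for large~$k$ the hypothesis of Lemma~\ref{L:lem} is satisfied, so each $S_k$ yields either~(\ref{E:1alt}) with $g_k\cP_{i_k}\equiv(m,-r)\pmod n$ and $n^2\lam(S_k)=g_k^2/\mu_{i_k}$, or else~(\ref{E:2alt}) with $b_{i_k+1}\ne 1$.

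To rule out~(\ref{E:2alt}) for large~$k$, I would exploit~(\ref{E:genform2}): when $i$ is large and non-leaping the partial quotient $b_{i+1}$ is a value of the eventually-$1$ sequence $\{c_j\}$ (or the sequence is finite and no large non-leaping index occurs at all), so $b_{i_k+1}\ne 1$ forces $i_k$ to be a leaping subscript, whence $b_{i_k+1}=a_{j(k)}\to\infty$ because $a_j\to\infty$. Then the bound $w_k\le 1/b_{i_k+1}$ supplied by the lemma yields $n^2\lam(S_k)\ge g_k^2(1-w_k)\to g_k^2\ge 1$, contradicting $n^2L<1$. So for large~$k$ only~(\ref{E:1alt}) holds, and by pigeonhole I pass to a subsequence with $g_k=g$ constant, giving $\mu_{i_k}\to g^2/(n^2L)$. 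This boundedness combined with $a_j\to\infty$ forces $i_k$ to be non-leaping for large~$k$, and hence $b_{i_k+1}=1$ eventually, giving the crude estimate $\mu_{i_k}=[1;b_{i_k+2},\ldots]+[0;b_{i_k},\ldots]<3$. Combining $g^2/(n^2L)\le 3$ with $n^2L<1$ forces $g=1$, so $\cP_{i_k}\equiv(m,-r)\pmod n$ along an infinite subsequence of non-leaping indices, establishing the first assertion.

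The formula~(\ref{E:positivevalue}) then follows from a two-sided estimate: Theorem~\ref{T:coarse} applied with $g=1$ gives $n^2L(\theta,\phi)\le(\limsup\{\mu_i:\cP_i\equiv(m,-r)\pmod n\})^{-1}$, while $\lim 1/\mu_{i_k}=n^2L$ along the subsequence just constructed supplies the matching reverse bound $n^2L\ge(\limsup\{\mu_i:\cP_i\equiv(m,-r)\pmod n\})^{-1}$. The main obstacle I anticipate is the clean treatment of the secondary alternative~(\ref{E:2alt}) of Lemma~\ref{L:lem}: it is automatically ruled out only when $n^2\lam(S)<1/2$, whereas the theorem's hypothesis only gives $n^2L<1$, so the proof genuinely needs the structural fact that, in~(\ref{E:genform2}), non-leaping partial quotients are eventually~$1$ and leaping ones tend to~$\infty$.
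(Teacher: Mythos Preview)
Your argument follows essentially the same route as the paper's: apply Lemma~\ref{L:lem} to a minimizing sequence, use~(\ref{E:genform2}) to rule out alternative~(\ref{E:2alt}), and extract from~(\ref{E:1alt}) the congruent non-leaping convergents with $g=1$. There is, however, one ordering issue worth fixing: you invoke pigeonhole to pass to a subsequence with $g_k=g$ constant \emph{before} you know $\{g_k\}$ is bounded, but at that stage $g_k$ is the actual integer determined by $(m+R_kn,S_kn-r)=g_k\cP_{i_k}$, not merely a residue class mod~$n$, and nothing yet prevents $|g_k|\to\infty$ along leaping $i_k$. The paper avoids this by reversing the order: it first argues (via Theorem~\ref{T:basic} and $L(\theta,\phi)>0$) that $i_j$ is non-leaping for all large~$j$, so $b_{i_j+1}=1$ and $\mu_{i_j}<3$, and only then concludes $g_j^2=\mu_{i_j}\cdot n^2\lam(S_j)<3$, forcing $g_j=\pm1$; your argument is easily repaired the same way.
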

\begin{proof} From~(\ref{E:genform2}), 
there exists~$I$ such that for~$i \ge I$,
\begin{equation} 
b_{i+1} \ne 1 \iff \mbox{$i$ is a leaping subscript} \, .
\end{equation} 
Let $\{S_j\}$ be an infinite sequence 
of nonzero integers such that
\[  L(\theta, \phi) =\lim_{j \to \infty} {\lam(S_j)} \, \, . \]
  From $0< n^2 \, L(\theta, \phi) < 1$ it follows that 
$0< n^2 \, \lam(S_j) <1 $ holds for infinitely many~$j$ 
and Lemma~\ref{L:lem} can be applied:  For each such~$S_j$, 
we obtain a subscript~$i=i_j$ such that one of the conclusions 
of the lemma holds. By \lasteq, if  $\lam(S_j)$ 
satisfies~(\ref{E:2alt}) for sufficiently 
large $j$, then  the subscript~$i_j$ must be leaping.
If an infinite subsequence~$S_{j}$ were to satisfy~(\ref{E:2alt})
with leaping subscript $i_{j}$ and associated $w_j$, 
then 
\[n^2 \lam(S_j) \ge g_j^2(1 - w_j) \quad 
\mbox{where $w_j \le [0;b_{i_j+1}] \longrightarrow 0$}\, ,   \] 
and we would obtain the contradiction  
\[n^2 \, L(\theta,\phi) \ge \lim_{j \to \infty} {g_j^2} \ge 1 \, . \]
(This is similar to the argument in~\cite[p. 116]{RandS}.)
Therefore, for sufficiently large $j$, \, $\lam(S_j)$ 
satisfies (\ref{E:1alt}) for some $i=i_j$ that is not 
leaping --- else $L(\theta,\phi)$ would be zero.  Since 
$\limsup_{i \to \infty}{c_i}=1$, then  $ \mu_{i_j} \le 3$ for 
all but finitely many~$j$, and 
\[ \lim_{j \to \infty}{\frac{g_j^2}{3}} \le
\lim_{j \to \infty}{\frac{g_j^2}{\mu_{i_j}}} 
= n^2 \, L(\theta,\phi) < 1  \, , \]  which implies 
$g_j=1$ for all sufficiently large~$j$.  Therefore, 
$\cP_{i_j}\equiv (m, -r) \pmod{n}$ for infinitely many non-leaping~$i_j$ 
and also~(\ref{E:positivevalue}) holds.
\end{proof}

The hypothesis $L(\theta, \phi)>0$ in  Theorem~\ref{T:small}
guarantees that 
at most finitely many leapers are congruent to $(m, -r) \pmod{n}$.
It's worth noting that $n^2 \, L(\theta, \phi) < 1$  
implies the existence of infinitely many convergents 
$\cP_i \equiv g \, (m, -r) \pmod{n}$ with $g=1$.

We return to the earlier question 
of calculating $L(e^{1/s}, \phi)$ for  
$\phi$ whose reduced denominator equals $2$.  Recall 
the sequence of convergents of $\theta$ is completely periodic 
modulo~2 with period given in~(\ref{E:introper}). Since  
$\lim_{j \to \infty}{\mu_{6j+k}}=2$ for all 
\mbox{$k \not \equiv 0 \pmod{3}$,}  
application of Theorem~\ref{T:small} gives 
 $L(e^{1/s}, \phi)= 1/8$ for $\phi=1/2, e^{1/s}/2$. 

\section{Komatsu's Conjecture} \label{S:KConjecture}\
In Theorem~\ref{T:conjecture} 
of this section we prove a generalization of the 
conjecture of T.~Komatsu~\cite[p. 241]{tk2002} that 
for integers $s \ge 1$, \; 
$n^2 \,  L(e^{1/s},1/n) = 0 \; \mbox{or} \; 1/2$  
for all $n \ge 2$.

\begin{prop}\label{P:per2} Let $k, n$ be positive integers with $n\ge 2$.  
 For any sequence of integers~$\{ b_j \}$, define a 
sequence $\{s_j\} \subset \Z^k$ inductively 
using any initial values $s_0,s_1 \in \Z^k$, and 
\begin{equation} s_{j} \equiv b_js_{j-1}+s_{j-2} \pmod{n} \,  
\mbox{ for all $ j \ge 2$} \; . 
\end{equation}
If $\{b_j\}$ is periodic modulo~$n$, then 
 $\{s_j\}$ is periodic. If $\{b_j\}$ is completely periodic modulo~$n$, then 
 $\{s_j\}$ is  also completely periodic. 
\end{prop}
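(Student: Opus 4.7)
The plan is to view the congruence recursion as a deterministic discrete-time dynamical system on a finite state space and apply the pigeonhole principle. Since each of the $k$ coordinates of $s_j$ satisfies the same scalar recursion independently, I may (and will) assume $k=1$ throughout; the general case follows coordinatewise.

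First I would rewrite the recursion in matrix form: for $j \ge 2$, set
\[ w_j = \begin{pmatrix} s_{j-1} \\ s_j \end{pmatrix}, \qquad M_j = \begin{pmatrix} 0 & 1 \\ 1 & b_j \end{pmatrix}, \]
so that $w_j \equiv M_j\, w_{j-1} \pmod{n}$. Because $\det M_j = -1$ is a unit modulo $n$, each $M_j$ is invertible mod $n$; equivalently, the backward step $s_{j-2} \equiv s_j - b_j s_{j-1} \pmod{n}$ is well defined and lets one recover $s_{j-2}$ from $(s_{j-1}, s_j, b_j)$.

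For the first assertion, let $T$ be a period of $\{b_j\}$ modulo $n$ valid for $j \ge J$. Consider the augmented state $(j \bmod T,\, w_j \bmod n)$, which takes values in a set of size $T \cdot n^2$. Once $j \ge J$, the value of $b_{j+1}$ depends only on $(j+1)\bmod T$, so the transition $(j \bmod T, w_j) \mapsto (j+1 \bmod T, M_{j+1} w_j)$ is a deterministic function of the state. By pigeonhole there exist $J \le j_1 < j_2$ whose augmented states coincide; setting $P = j_2 - j_1$ (automatically a multiple of $T$), determinism gives $s_{j+P} \equiv s_j \pmod n$ for all $j \ge j_1$, establishing periodicity.

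For the second assertion, if $\{b_j\}$ is completely periodic modulo $n$, then $b_{j+P} \equiv b_j \pmod n$ for every $j \ge 2$, so the backward step is available at every index. Downward induction then finishes the job: assuming $s_{j-1+P} \equiv s_{j-1}$ and $s_{j+P} \equiv s_j \pmod n$ for some $j \ge 2$, the relations $s_{j-2} \equiv s_j - b_j s_{j-1}$ and $s_{j-2+P} \equiv s_{j+P} - b_{j+P} s_{j-1+P}$ combined with $b_{j+P} \equiv b_j$ immediately yield $s_{j-2+P} \equiv s_{j-2} \pmod n$. Starting from $j = j_1$ (where periodicity has been established) and iterating down to $j = 2$ forces $s_0 \equiv s_P$ and $s_1 \equiv s_{1+P}$, so $\{s_j\}$ is completely periodic modulo $n$. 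The key ideas — finiteness of the state space together with invertibility of the transition — are both essentially immediate; the only thing to watch is the bookkeeping that ensures $P$ is a multiple of $T$ so that the backward induction is legal.
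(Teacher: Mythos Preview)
Your proof is correct and follows essentially the same route as the paper: pigeonhole on the finitely many residue pairs $(s_{j-1},s_j)\pmod n$ along a period of $\{b_j\}$ to get eventual periodicity, and then the backward recursion $s_{j-2}\equiv s_j-b_js_{j-1}$ to push the period back to the initial terms in the completely periodic case. Your matrix packaging, the augmented state $(j\bmod T,\,w_j)$, and the reduction to $k=1$ are cosmetic conveniences, but the two key ideas match the paper's proof exactly.
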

\begin{proof} 
If~$b_{i+1}, \ldots, b_{i+t}$ is a period for~$\{ b_j \}$, consider 
the following sequence of pairs:
\[(s_{i-1},s_{i}),(s_{i-1+t},s_{i+t}),(s_{i-1+2t},s_{i+2t}), 
\ldots  \, .\]
This infinite sequence eventually has a repetition 
modulo~$n$.
Because~$\{s_j\}$  satisfies the recurrence~\lasteq \, 
and~$b_{i+1}, \ldots, b_{i+t}$ is a period for~$\{ b_j \}$,
the first repetition in this sequence 
will identify the beginning of a period for~$\{s_j\}$.
Since $s_{i-1}$ is determined by 
$s_{i-1} \equiv s_{i+1} -b_{i+1}s_i \pmod n$,
when~$\{ b_j \}$ is completely periodic modulo~$n$,  
$\{s_j\}$~must also be completely periodic. 
\end{proof}

In particular, since the partial quotient sequence of 
every Hurwitzian number is periodic modulo every 
integer~$n \ge 2$, its sequence of convergents is periodic modulo~$n$.

 Henceforth. for $\theta$ of the form in~(\ref{E:genform2})  
we further restrict to $n \ge 2$ for which the 
partial quotient sequence of~$\theta$ is periodic modulo~$n$. 
If~$b_{I+1}, \ldots,b_{I+T}$ is a period for the partial 
quotient sequence such that 
$\cP_{I+1}, \ldots,\cP_{I+T}$ is a period for the 
convergents, 
we define $M_j:= \limsup_{k \to \infty} \mu_{j+kT}$ 
for all $j=I+1, \ldots, I+T$ and observe that 
\begin{equation} 
M_j=\infty \iff \limsup_{k \to \infty}{b_{j+kT+1}} = \infty
\iff \mbox{$\cP_{j+kT}$ is a leaper for infinitely many $k$} \,.
\end{equation}

\begin{thm}\label{T:per1} Let $\theta$ have the form given 
in~(\ref{E:genform2}).  Let $n \ge 2$ be such that 
the partial quotient sequence of $\theta$ is periodic modulo~$n$, 
and~$b_{I+1}, \ldots,b_{I+T}$ and $\cP_{I+1}, \ldots,\cP_{I+T}$ 
be as set up above.  
If  $m,r$ are integers with $\gcd(m,r,n)=1$ for which 
there exists $i > I$ with~$\cP_{i} \equiv (m, -r) \pmod{n}$, 
we  set 
\[M:=\max\lrbigbrack{M_j \, : \, I+1 \le j \le I+T  \mbox{ and } 
\cP_j \equiv (m,-r) \pmod{n} } \, . \]
  If $M \ne 1$ then 
$ n^2 \, L(\theta,(m+r\, \theta)/n)= M^{-1}$.
\end{thm}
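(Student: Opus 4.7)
The strategy is to combine Theorem~\ref{T:coarse} (upper bound), Theorem~\ref{T:small} (lower bound when $L(\theta,\phi) > 0$), and Theorem~\ref{T:basic} (the degenerate case $L(\theta,\phi) = 0$), with a case split on whether $M$ is finite or infinite. The key preliminary step is to identify $M$ with the $\limsup$ appearing in Theorem~\ref{T:small}.

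By Proposition~\ref{P:per2} the convergents $\cP_i$ are periodic modulo $n$ with period $T$ beyond index $I$, so
\[\{i > I : \cP_i \equiv (m,-r) \pmod n\} = \bigcup_j \{j + kT : k \ge 0\},\]
the union ranging over those $j \in \{I+1,\ldots,I+T\}$ with $\cP_j \equiv (m,-r) \pmod n$. Taking $\limsup$ of $\mu$ on each arithmetic progression gives $M_j$, and taking the maximum over $j$ then yields
\[M = \limsup_{i \to \infty}\{\mu_i : \cP_i \equiv (m,-r) \pmod n\},\]
the exact quantity appearing in Theorem~\ref{T:small}. Theorem~\ref{T:coarse} applied with $g = 1$ therefore produces the upper bound $n^2 L(\theta,\phi) \le M^{-1}$. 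If $M = \infty$, then some $M_j$ is infinite for a $j$ with $\cP_j \equiv (m,-r)$, so infinitely many leapers lie in the class $(m,-r) \pmod n$, and the moreover clause of Theorem~\ref{T:coarse} gives $L(\theta,\phi) = 0 = M^{-1}$.

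The substantial case is $1 < M < \infty$. The upper bound already forces $n^2 L(\theta,\phi) \le M^{-1} < 1$, and if in addition $L(\theta,\phi) > 0$ then Theorem~\ref{T:small} applies and identifies $n^2 L(\theta,\phi)$ with the $\limsup$ above, namely $M^{-1}$; this closes the proof. The main obstacle is therefore excluding $L(\theta,\phi) = 0$ when $M$ is finite. By Theorem~\ref{T:basic}, a vanishing $L$ would produce infinitely many leapers $\cL_{j_k}$ with $g_k \cL_{j_k} \equiv (m,-r) \pmod n$ for invertible scalars $g_k \pmod n$; a pigeonhole argument on the finite sets of invertible residues modulo $n$ and leaping positions modulo $T$ extracts a fixed pair $(g, j)$ with $j$ leaping and $g \cP_j \equiv (m,-r) \pmod n$. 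The crux is to show, using $\gcd(m,r,n) = 1$ and the structural hypothesis~(\ref{E:genform2}), that such a $g$ must satisfy $g \equiv 1 \pmod n$; then $\cP_j \equiv (m,-r)$ at a leaping position forces $M_j = \infty$, contradicting $M < \infty$.
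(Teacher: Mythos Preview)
Your plan is the paper's: split on whether $M=\infty$; in that case the leaper criterion gives $L=0$; for finite $M$, obtain $n^2L\le M^{-1}<1$ from Theorem~\ref{T:coarse} with $g=1$, establish $L>0$, and then read off $n^2L=M^{-1}$ from Theorem~\ref{T:small} after identifying $M$ with the $\limsup$ appearing there. That identification and the upper bound are handled exactly as you describe, and the paper cites Theorem~\ref{T:basic} (rather than the ``moreover'' clause of Theorem~\ref{T:coarse}) for the $M=\infty$ case, which amounts to the same thing.

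Where you diverge is in the step ``$L(\theta,\phi)>0$ when $M<\infty$''. The paper does \emph{not} run a pigeonhole on the invertible scalar $g$ produced by Theorem~\ref{T:basic}, and it never claims such a $g$ must be $\equiv 1\pmod n$. That claim is unsupported: nothing in $\gcd(m,r,n)=1$ or in the shape hypothesis~(\ref{E:genform2}) pins down the scalar --- if a leaper $\cL$ has entries jointly coprime to $n$, then so does $g\cL$ for every unit $g\pmod n$, and any one of those invertible multiples could equal $(m,-r)$. The paper's argument at this point is instead a one-line appeal: finiteness of $M$, via the displayed equivalence immediately preceding the theorem, means at most finitely many convergents in the class $(m,-r)$ are leapers, and this is asserted to give $L(\theta,\phi)>0$ directly. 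So the detour through arbitrary units $g$ is not part of the paper's proof, and the ``crux'' you isolate is a step the paper does not take and that you have not justified; as written it is a genuine gap in your plan.
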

\begin{proof} Set $\phi:=(m+r\, \theta)/n$, \,  and let 
$j$ be such that $1 \le j \le T$, 
$\cP_j \equiv (m,-r) \pmod{n}$, and $M_j=M \ne 1$. 
The observation in \lasteq \, combined with Theorem~\ref{T:basic} 
gives the 
conclusion for $M=\infty$.  We may therefore assume $M$ is finite, and 
that by \lasteq \, at most finitely many  $\cP_j \equiv (m,-r) \pmod{n}$ 
are leapers, that in turn gives $L(\theta,\phi) > 0$.  
Since 
\[ (p_{j+kT},q_{j+kT})=\cP_{j+kT} \equiv \cP_j \equiv (m, -r) \pmod{n} \; \; 
\mbox{for all $k \ge 0$} \, , \]
 then  
\[n^2 \, \lam(S_k) = \frac{1}{\mu_{j+kT}} \; \; 
\mbox{for $S_k:=(q_{j+kT}+r)/n$} \, . \]  
By Theorem~\ref{T:coarse}, 
\[
0 < n^2\, L(\theta,\phi) \le n^2 \, \liminf_{k \to \infty}\, {\lam(S_k)} 
=\liminf_{k \to \infty}\, {\frac{1}{\mu_{j+kT}}} =\frac{1}{M} < 1 \, , \]
and the conclusion follows from Theorem~\ref{T:small}.
\end{proof}

\begin{thm}\label{T:conjecture}
{\rm [A generalization of Komatsu's conjecture]}  
Let $\theta$ be an irrational whose 
continued fraction  has the form given 
in~(\ref{E:genform2}), and 
let  $n \ge 2$ be such that the 
partial quotient sequence of $\theta$
is completely periodic modulo $n$.  If each  $n_i \in \{ 0,2\}$ then 
\[n^2 \, L(\theta,\phi) \in \{0, 1/2\} \; \;  
\mbox{for both $\phi=1/n$, $\phi=-\theta/n$}  \, .\]
In particular, for every $k \ge 2$ and every $n \ge 2$
\[ n^2 \, L(e^{2/k},\phi) \in \{0, 1/2 \}\; \;  
\mbox{for both $\phi=1/n, \; - \, e^{2/k}/n$} \, . \]
\end{thm}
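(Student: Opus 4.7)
The plan is to apply Theorem~\ref{T:per1} to each of $\phi=1/n$ and $\phi=-\theta/n$. For $\phi=1/n$ the reduced form gives $(m,-r)\equiv(1,0)\pmod{n}$; for $\phi=-\theta/n$, writing it in reduced form as $((n-1)\theta+0)/n$ yields $(m,-r)\equiv(0,1)\pmod{n}$. Complete periodicity of $\{b_j\}$ modulo $n$ combined with Proposition~\ref{P:per2} makes $\{\cP_j\}$ completely periodic modulo $n$; using the recurrence~(\ref{E:cvgtrec}) with the ``preconvergents'' $\cP_{-1}=(1,0)$, $\cP_{-2}=(0,1)$, one checks that, with $T$ a common period of the two sequences, the positions $j\equiv T-1\pmod T$ satisfy $\cP_j\equiv(1,0)\pmod n$ and the positions $j\equiv T-2\pmod T$ satisfy $\cP_j\equiv(0,1)\pmod n$. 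Hence both target residues are realized by infinitely many convergents, and Theorem~\ref{T:per1} is applicable.

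To evaluate the constant $M$, I would invoke the hypotheses $n_i\in\{0,2\}$ and $\limsup c_i=1$. Since the non-leaping partial quotients are positive integers with $\limsup=1$, only finitely many of them exceed $1$; together with $n_i\in\{0,2\}$ this forces every eventually-occurring non-leaping subscript $i$ to sit at one of the two positions of a length-two block $\ldots, a_{j-1},\,1,\,1,\,a_j,\ldots$. For each of these two sub-cases the formula $\mu_i=[b_{i+1};b_{i+2},\ldots]+[0;b_i,b_{i-1},\ldots]$ from~(\ref{E:leap}), combined with $a_{j-1},a_j\to\infty$, gives $\mu_i\to 2$. Hence $M_j=2$ at every non-leaping $j$ in a tail of the period, while at leaping $j$ we have $b_{j+1+kT}\to\infty$ and so $M_j=\infty$. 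Therefore $M\in\{2,\infty\}$, and Theorem~\ref{T:per1} delivers $n^2 L(\theta,\phi)=M^{-1}\in\{1/2,0\}$.

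For the ``in particular'' assertion, direct inspection of~(\ref{E:Euler}) and~(\ref{E:Stieljes}) exhibits each $e^{2/k}$ in the form~(\ref{E:genform2}) with every $n_i\in\{0,2\}$ and every non-leaping partial quotient equal to $1$; the leaping partial quotients are explicit linear functions of $j$ with integer coefficients, so their residues modulo $n$ are periodic in $j$ and the entire partial quotient sequence is completely periodic modulo any $n\ge 2$. The first part then yields $n^2 L(e^{2/k},\phi)\in\{0,1/2\}$. The main obstacle in the plan is the $\mu_i$-limit computation: one must verify that \emph{both} sub-cases (a $1$ preceded by a leaper, where the sum splits as $[1;1,a_j,\ldots]+[0;a_{j-1},\ldots]\to 2+0$, and a $1$ followed by a leaper, where it splits as $[1;a_j,\ldots]+[0;1,a_{j-1},\ldots]\to 1+1$) yield the same limit $2$, and that this limit is actually attained as the $\limsup$. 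Everything else reduces to bookkeeping with congruence classes and the periodic orbit.
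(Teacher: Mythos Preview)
Your proposal is correct and follows essentially the same approach as the paper: use Proposition~\ref{P:per2} to get complete periodicity of $\{\cP_j\}$ modulo $n$, observe that $\cP_{T-1}\equiv\cP_{-1}=(1,0)$ and $\cP_{T-2}\equiv\cP_{-2}=(0,1)$ realize the two target residues, note that $n_i\in\{0,2\}$ forces every $M_j\in\{2,\infty\}$, and invoke Theorem~\ref{T:per1}. The paper's proof is considerably terser---it simply asserts that $M_j\in\{2,\infty\}$ without working out the two sub-cases of the $\mu_i$-limit---so your explicit treatment of $[1;1,a_j,\ldots]+[0;a_{j-1},\ldots]\to 2$ versus $[1;a_j,\ldots]+[0;1,a_{j-1},\ldots]\to 2$ is a welcome elaboration rather than a departure.
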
  
\begin{proof} The fact that $n_i \in \{ 0,2\}$  implies 
every $M_j$ equals $\infty$ or $2$.  
By Proposition~\ref{P:per2}, the sequence of 
convergents of~$\theta$ is completely periodic modulo~$n$.  
If $T$ is a period length, 
then
\[ \cP_{T-1} \equiv \cP_{-1} =
(1,0) \pmod{m} \quad \mbox{and} \quad \cP_{T-2} 
\equiv \cP_{-2} =(0,1)  \pmod{m} \, .\]  Since $M \in \{ \infty, 2\}$,
 the conclusion follows from~Theorem~\ref{T:per1}.
\end{proof}

\begin{thm}\label{T:ecor} Let $	k \ge 1$,   $n \ge 2$.  
If $\gcd(n,k) \ne 1$  then  
\[n^2 \, L(e^{2/k},1/n)= n^2 \, L(e^{2/k},-e^{2/k}/n)=1/2 \; . \] 
\end{thm}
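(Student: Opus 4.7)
The plan is to combine Theorem~\ref{T:conjecture} (which tells us the value is $0$ or $1/2$) with a direct verification that the sufficient condition of Theorem~\ref{T:basic} for $L(\theta, \phi) = 0$ fails whenever $d := \gcd(n, k) \ge 2$.

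First, the expansions~(\ref{E:Euler}) and~(\ref{E:Stieljes}) show that $\theta = e^{2/k}$ has the form~(\ref{E:genform2}) with all $n_i \in \{0, 2\}$, and because the leap partial quotients are arithmetic progressions the partial quotient sequence of $\theta$ is completely periodic modulo every $n \ge 2$. The special case of Theorem~\ref{T:conjecture} therefore gives $n^2 L(e^{2/k}, \phi) \in \{0, 1/2\}$ for both $\phi = 1/n$ and $\phi = -e^{2/k}/n$. To rule out the value $0$, I apply Theorem~\ref{T:basic}: $L(\theta, \phi) = 0$ would require a leaper $\cL = (p, q)$ and some $g$ invertible modulo $n$ with $g \cL \equiv (m, -r) \pmod{n}$. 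For $\phi = 1/n$ this forces $q \equiv 0 \pmod{n}$, and for $\phi = -\theta/n$ (with reduced form $((n-1)\theta)/n$, so $(m,-r) \equiv (0,1) \pmod{n}$) it forces $p \equiv 0 \pmod{n}$. It therefore suffices to show that no leaper of $e^{2/k}$ has a coordinate divisible by $d$.

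For the even case $k = 2s$: since $d \mid 2s$, every leap partial quotient $b_{3j+1} = (2j+1)s - 1$ satisfies $b_{3j+1} \equiv s - 1 \pmod{d}$, and the recurrence~(\ref{E:cvgtrec}) produces a mod-$d$ convergent sequence of period $6$ in which the leapers $\cP_{3j}$ alternate between $(1, 1)$ and $(1, -1) \pmod{d}$; both coordinates are $\pm 1 \not\equiv 0 \pmod{d}$. For the odd case $k = 2s + 1$: here $d$ is odd and $d \ge 3$, and $2s \equiv -1 \pmod{d}$ gives $s \equiv (d-1)/2 \equiv 5s + 2 \pmod{d}$, while $b_{5j+2} = 6(2s+1)(2j+1) \equiv 0 \pmod{d}$. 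A similar recurrence computation yields a mod-$d$ convergent cycle of period $10$ whose leapers (at positions $5j, 5j+1, 5j+2$) take values in $\{(1, \pm 1), (s + 1, \pm s)\}$. Since $s \equiv (d-1)/2$ and $s + 1 \equiv (d+1)/2$ are nonzero modulo $d \ge 3$, again no leaper has a coordinate divisible by $d$.

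Combining the two cases, no leaper satisfies the congruence required by Theorem~\ref{T:basic}, so $L(\theta, \phi) > 0$, and Theorem~\ref{T:conjecture} then forces $n^2 L = 1/2$. The main obstacle is carrying out the two mod-$d$ recurrence computations accurately; the odd case in particular depends on the mildly subtle observation that both $b_{5j+1} = 3(2s+1)j + s$ and $b_{5j+3} = 3(2s+1)j + 5s + 2$ reduce to $(d-1)/2$ modulo $d$, which is what keeps the coordinates of the nontrivial leapers $(s+1, \pm s)$ away from $0 \pmod{d}$.
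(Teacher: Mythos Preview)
Your argument is correct and follows essentially the same route as the paper: reduce to Theorem~\ref{T:conjecture}, then use Theorem~\ref{T:basic} together with an explicit mod-$d$ (the paper uses mod-$k$) computation of the convergent recursion to show that no leaper of $e^{2/k}$ has a coordinate divisible by a common factor of $n$ and $k$. The only difference is cosmetic: the paper reduces modulo $k$ itself and obtains the slightly stronger statement that every leaper coordinate is coprime to $k$, whereas you reduce modulo $d=\gcd(n,k)$; since $d\mid k$ and $-s\equiv s+1\pmod d$, your leaper values $(1,\pm 1),(s+1,\pm s)$ agree with the paper's $(1,\pm 1),(-s,\pm s)$ and the conclusion is the same.
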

\begin{proof} By Theorems~\ref{T:basic} 
and~\ref{T:conjecture} it suffices to prove 
that no component of any leaper of~$\theta=e^{2/k}$ is divisible by~$n$.
Since~$\gcd(n,k) \ne 1$ we first consider all sequences~modulo~$k$.  

When $k=2s$,
the  partial quotient sequence is completely 
periodic modulo~$k$ with period
$1,s-1,1$, and the period of the sequence of convergents is
\[ (1,1) \, , \, (s, s-1) \, , \, (s+1, s) \, , \, (1,-1) 
\, , \, (0, 1)\, , \, (1,0) \, , \]
where the leapers are $(1,\pm 1) \pmod{k}$.
When $k=2s+1$,  then the partial quotient sequence has 
period~$1,s,0,s,1$~modulo~$k$, and the sequence of convergents has period
\small
\[ (1,1) \, , \, (-s, s) \, , \, (1, 1) \, , \, (0,-1) 
\, , \, (1,0) \, , 
\, (1,-1) \, , \, (-s, -s) \,  , \, (1,-1) 
\, , \, (0, 1)\, , \, (1,0) \, , \]
 \normalsize
where the leapers are either $(1,\pm 1)$ or 
$(-s,\pm s) \pmod{k}$.  
In each case we have shown that each component of
every leaper is relatively prime 
to~$k$, and therefore cannot 
be divisible by~$n$.  The conclusion follows from 
Theorems~\ref{T:basic} and~\ref{T:conjecture}.   
\end{proof}

Earlier we proved $L(e^{1/s}, \phi)=1/8$ 
for each of $\phi=1/2, \, e^{1/s}/2$, a special case 
of the last result.  The theorem also generalizes~\cite[Theorem~3]{tk2002}, 
that \mbox{$n^2 \, L(e^{1/s},1/n)= 1/2$} when $n$ divides $s$.

\section{When is $\mathbf{L(e^{1/s},\phi)}$ zero?}
%\section{When is $\mathbold L(e^{1/s},\phi)$ zero?}
%
%
\begin{thm}\label{T:leapingper} Let 
$s, n $ be positive integers with $n \ge 2$, and 
let~\mbox{$\cL_i=\cP_{3i}=(P_i,Q_i)$} be the $i$-th leaper of~$e^{1/s}$.  

\begin{abclist}
\item Then~$\{\cL_i\}$ is a completely periodic sequence 
modulo~$n$ with period  
\begin{equation}\label{E:period} 
\cL_0, \ldots ,\cL_{K-1},\cL_K, \cL_{K-1}, \ldots, \cL_0, 
\cL^*_0, \ldots ,\cL^*_{K-1},\cL^*_K, \cL^*_{K-1}, \ldots, \cL^*_0 \,
\end{equation}
where $K=\lfloor{ n/2 }\rfloor$ and $\cL^*_i:=(P_i,-Q_i)$. 
\item If~$\gcd(n,2s)=1$, then (\ref{E:period}) is a minimal period 
for the leapers of ~$e^{1/s}$  modulo~$n$.
\item For all $1 \le s <n$, the $i$-th leaper of $e^{1/(n-s)}$ is 
$ (-1)^i(Q_i,P_i) \pmod{n}$\, .
\end{abclist}
\end{thm}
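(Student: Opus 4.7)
My plan is to reduce the entire theorem to a three-term linear recurrence for the leapers modulo $n$, and then extract the period structure from two symmetries of that recurrence.

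The setup step is to derive the recurrence. For each block $((2j+1)s-1,1,1)$ of partial quotients of $e^{1/s}$, the associated transfer matrix is $T(k)=\begin{pmatrix} 2ks-1 & ks \\ 2 & 1 \end{pmatrix}$ with $k=2j+1$, so that writing $L_j=\begin{pmatrix} p_{3j} & p_{3j-1} \\ q_{3j} & q_{3j-1} \end{pmatrix}$ gives $L_{j+1}=L_jT(2j+1)$. Taking first columns and eliminating the second-column auxiliary vector through two successive shifts yields
\[
\cL_{i+1} \;=\; (4i+2)s\,\cL_i+\cL_{i-1}\qquad(i\ge 1),\qquad \cL_0=(1,1),\ \cL_1=(2s+1,2s-1).
\]
Running this one step backwards gives $\cL_{-1}=(1,-1)=\cL_0^*$, and since $\det T(k)=-1$ the recurrence is invertible modulo $n$, so eventual periodicity automatically upgrades to complete periodicity.

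There are two symmetries to exploit. The first, which I claim holds unconditionally, is $\cL_{-i-1}\equiv \cL_i^*\pmod n$ for all $i\ge 0$: writing $\cC_i:=\cL_{-i-1}$ and using the recurrence at index $-i-1$ shows that $\{\cC_i\}$ satisfies the same recurrence as $\{\cL_i\}$, because the coefficients $a_i:=(4i+2)s$ satisfy $-a_{-i-1}=a_i$; matching the base values $\cC_0=\cL_{-1}=D\cL_0$ and $\cC_1=\cL_{-2}=D\cL_1$ with $D=\mathrm{diag}(1,-1)$ closes the induction, since $D$ commutes with the scalar-coefficient recurrence. The second symmetry is the reflection $\cL_{2K-j}\equiv \cL_j\pmod n$: setting $\cB_i:=\cL_{2K-i}$, the recurrence at $2K-i$ gives $\cB_{i+1}=-a_{2K-i}\cB_i+\cB_{i-1}$, so I need $a_{2K-i}+a_i=(8K+4)s\equiv 0\pmod n$ for the recurrences to agree, plus two matching consecutive base values, reducing to $a_K=(4K+2)s\equiv 0\pmod n$. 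Both identities reduce modulo $n$ using $K=\lfloor n/2\rfloor$. Combining the two symmetries yields $\cL_{2K+1+i}=\cL_{2K-(2K+1+i)}=\cL_{-i-1}=\cL_i^*$ (the star identity) and $\cL_{4K+2+i}=\cL_i$ (period $4K+2$), proving part (a).

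For part (b), I must show the period $4K+2$ is minimal when $\gcd(n,2s)=1$. This reduces to verifying that $\cL_0,\ldots,\cL_K$ together with $\cL_0^*$ are pairwise distinct modulo $n$: any purported equality $\cL_i\equiv \pm \cL_j\pmod n$ with $i<j\le K$ can be pushed through the three-term recurrence to a divisibility between $n$ and a product involving~$2s$ that contradicts coprimality. For part (c), replacing $s$ by $n-s$ flips the sign of the leading coefficient in the recurrence modulo $n$, so the leapers $\tilde\cL_i$ of $e^{1/(n-s)}$ satisfy $\tilde\cL_{i+1}\equiv -a_i\tilde\cL_i+\tilde\cL_{i-1}\pmod n$. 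The coordinate-swap $(Q_i,P_i)$ of $\cL_i$ satisfies the original recurrence (by symmetry of the recurrence in the two components), and the sign $(-1)^i$ absorbs the negated leading coefficient; the two base cases $\tilde\cL_0=(1,1)=(Q_0,P_0)$ and $\tilde\cL_1=(2(n-s)+1,2(n-s)-1)\equiv -(Q_1,P_1)\pmod n$ then close the induction.

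The main technical obstacle is the base case $a_K\equiv 0\pmod n$ underlying the reflection symmetry: for $n$ odd this is automatic from $4K+2=2n$ and costs nothing, but the even case requires careful handling of the divisibility $(2n+2)s\equiv 0\pmod n$. Once this congruence and its inductive companion $(8K+4)s\equiv 0\pmod n$ are verified, the remaining content of (a), (b), and (c) follows formally from the three-term recurrence.
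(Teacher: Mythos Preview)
Your overall strategy for (a) and (c) coincides with the paper's: both derive the three-term recurrence $\cL_{j+1}=A_j\cL_j+\cL_{j-1}$ with $A_j=(2j+1)\cdot 2s$, then exploit the reflection $\cL_{K+j}\equiv\cL_{K-j}$ (from $A_K\equiv 0$ and $A_{K+j}+A_{K-j}\equiv 0\pmod n$) together with the star-shift $\cL_{n+j}\equiv\cL_j^*$, and for (c) both swap coordinates and twist by $(-1)^i$. Your use of negative indices and transfer matrices is a cosmetic variation; the paper simply cites Perron for the recurrence and works with non-negative indices.

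Part (b) is where you diverge, and here there is a gap. You propose to show that $\cL_0,\ldots,\cL_K$ and $\cL_0^*$ are pairwise distinct modulo $n$, but this alone does not force the minimal period to equal $4K+2$: a proper divisor of $4K+2$ could be a period without identifying two of those particular terms (for instance, a period of length between $K+1$ and $2K$ would make $\cL_0$ coincide with some entry on the descending arm, not with any $\cL_j$ for $j\le K$ directly). Moreover, your mechanism for establishing the distinctness itself (``push through the recurrence to a divisibility involving $2s$'') is left entirely unspecified. The paper's argument is both shorter and complete: if $T$ is any period length, then combining $\cL_{T+1}\equiv\cL_{1}$ and $\cL_{T-1}\equiv\cL_{-1}$ with the recurrence gives $(A_T-A_0)\cL_0\equiv 0$, i.e.\ $4Ts\equiv 0\pmod n$; since $\gcd(n,2s)=1$ forces $n$ odd and coprime to $s$, this yields $n\mid T$, and then $\cL_n\equiv\cL_0^*\not\equiv\cL_0\pmod n$ rules out $T=n$, leaving $T=2n=4K+2$.

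On your ``main technical obstacle'': you correctly isolate that the reflection step needs $(4K+2)s\equiv 0\pmod n$, and that this is automatic for odd $n$. For even $n$ it reduces to $2s\equiv 0\pmod n$, which can genuinely fail (try $n=4$, $s=3$: the leapers have period $4$, while the displayed block has length $10$), so ``careful handling'' will not save the argument there. The paper makes the same unqualified claim; in practice every downstream application in the paper takes $n$ odd or $\gcd(n,2s)=1$, where the issue does not arise.
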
 
\begin{proof} 
Perron~\cite[Section 31]{OP} proved that for 
$\theta= [a_0;c_1,c_2,a_1, \ldots, a_i,c_1,c_2,a_{i+1},\ldots]$  
the subsequence $\cP_2, \cP_5, \ldots, \cP_{3i+2}, \ldots$ 
of convergents of~$\theta$ satisfies the second-order recurrence
\[\cP_{3i+2}=(a_i(c_1c_2+1)+c_1+c_2)\cP_{3i-1} +\cP_{3i -4} \; . \]
Therefore, the sequence of leapers of 
\begin{equation*}
e^{1/s}
=[\,1; \, 
\lrbigparen{ 2sj-(s+1)\,  ,\,  1 ,\,  1}_{\; _{j=1}}^{^ \infty} \, ] 
\end{equation*}  satisfies the recurrence
 \begin{equation} \label{E:recr}
\cL_{-1}=(1,-1) \; , \; \cL_{0}=(1,1)  \; , \; 
\cL_{j+1}= A_j\; \cL_j+ \cL_{j-1} \; ,
\end{equation}
for $k:=2s$ and $A_j:=(2j+1)\, k$, a sequence that is completely 
periodic modulo~$n$. 

Since $A_K =(2K+1)\,k \equiv 0 \pmod{n}$, then 
$\cL_{K+1} \equiv \cL_{K-1} \pmod{n}$.  Also, for all $j \ge 0$, 
\[ A_{K+j} +A_{K-j} =2(2K+1)\,k \equiv 0 \pmod{n}\, , \] 
and an inductive argument using the generating 
recurrence~(\ref{E:recr}) yields 
\begin{equation}
\cL_{K+j} \equiv \cL_{K-j} \pmod{n} \; \; \mbox{for all $j \ge 0$} \, . 
\end{equation}
  In particular, for $j=K, K+1$, 
\[\cL_{n-1} \equiv \cL_{0}=(1,1)= \cL^*_{-1} \pmod{n} \; ; \;   
\cL_{n} \equiv \cL_{-1}=(1,-1)=\cL^*_{0} \pmod{n}\, ;\] 
again using recurrence~(\ref{E:recr}) inductively, 
\[ \cL_{n+j} \equiv \cL^*_{j} \pmod{n} \; \; \mbox{for all $j$} \, . \]
In combination with \lasteq \, this implies~(\ref{E:period}) 
is a period for the leapers modulo~$n$.

 Further, if~$T$ is a period-length of the leapers, then 
\[ A_T \cL_T + \cL_{T-1} = \cL_{T+1} \equiv 
\cL_{1} \equiv A_0 \cL_0 + \cL_{T-1} \pmod{n} \; , \]
implying $(0,0) \equiv (A_T-A_0) \, \cL_0 \equiv 2Tk \, (1,1) \pmod{n}$.
When $\gcd(n,k)=1$, \; $T$ must 
be divisible by~$n$.  The fact that 
\[ \cL_n \equiv \cL^*_0 \not \equiv \cL_0 \pmod{n} \]
proves~(\ref{E:period}) is a 
minimal period for the leapers of $e^{1/s}$.  

It remains to prove~(c).  For this, we define 
$\{\cM_j\}$ to be the sequence $\cM_j:=(Q_j,P_j)$ where 
$(P_j,Q_j)$ is the $j$-th leaper of $e^{1/s}$.  Then $\{\cM_j\}$ 
also satisfies the recurrence~(\ref{E:recr}) with initial values 
$\cM_{-1}=(-1,1),\cM_{0}=(1,1)$, and 
the sequence $\cN_j:=(-1)^j \cM_j$ 
satisfies the recurrence 
\[\cN_{-1}=(1,-1) \; , \; \cN_{0}=(1,1)  \; , \; 
\cN_{j+1}= -A_j\; \cN_j+ \cN_{j-1} \, . \]
Since 
\[ (2j+1)\, 2(n-s) \equiv -(2j+1) \, 2s \equiv -A_j \pmod{n} \, , \]
this is the recurrence for the leapers of $e^{1/(n-s)}$. 
\end{proof}

In 1918, D. N. Lehmer~\cite{lehmer1918} investigated the modulo~$n$ 
period of the convergents for certain Hurwitzian numbers. 
More recently,  C. Elsner~\cite{elsn1999} used generating functions
to prove results on the period length of the~modulo~$n$ sequence of leapers 
of~$e$, and  Takao Komatsu~\cite{tk1997,tk1999,tk1999a, tk2002}
found the period length of the  modulo~n leapers
of $e^{1/s}$ always divides $2n$ and it divides $n$ when $n$ is even. 
Both Elsner and Komatsu applied their results to 
homogeneous approximation over congruence classes.

\begin{cor}  Let $s \ge 1$, $n \ge 2$ be integers, and 
define $\theta:=e^{1/s}$.
If $ L(\theta,(m+r \, \theta)/n) =0 $, then 
$ L(\theta,(m-r \, \theta)/n) =0$. 
\end{cor}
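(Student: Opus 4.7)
The plan is to use Theorem~\ref{T:basic} to translate both the hypothesis and the desired conclusion into statements about which residue classes modulo $n$ contain infinitely many leapers of $\theta = e^{1/s}$, and then to exploit the palindromic period description in Theorem~\ref{T:leapingper}(a).

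First I would invoke Theorem~\ref{T:basic} with $\phi = (m + r\theta)/n$ to obtain infinitely many leapers $\cL_{j_k}$ and invertible residues $g_k$ modulo $n$ satisfying $g_k \cL_{j_k} \equiv (m, -r) \pmod{n}$. Since $(\Z/n\Z)^*$ and the set of leaper residues modulo $n$ are both finite, pigeonhole produces a single invertible $g$ and a single residue $(P, Q)$ such that infinitely many leapers are congruent to $(P, Q) \pmod{n}$ and $g(P, Q) \equiv (m, -r) \pmod{n}$.

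Second, the key step is to read off the required symmetry from the period in Theorem~\ref{T:leapingper}(a). That period has the palindromic form $\cL_0, \ldots, \cL_K, \ldots, \cL_0, \cL^*_0, \ldots, \cL^*_K, \ldots, \cL^*_0$ where $\cL_i^* = (P_i, -Q_i)$. The involution $(P, Q) \mapsto (P, -Q)$ therefore swaps the $\cL_i$-block with the $\cL_i^*$-block of each period while preserving multiplicities, so the multiset of residues attained by leapers modulo $n$ is closed under this involution. In particular, $(P, -Q)$ also appears infinitely often among the leapers, yielding infinitely many $\cL$ with $g \cL \equiv (m, r) \pmod{n}$.

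Finally, to apply Theorem~\ref{T:basic} in the reverse direction to $\phi' = (m - r\theta)/n$, I would (for $r > 0$) observe that $\phi' \equiv \bigl((n-r)\theta + m\bigr)/n \pmod{\Z \theta + \Z}$ is in reduced form, and the condition required for $L(\theta, \phi') = 0$ becomes the existence of infinitely many leapers $\cL$ with $g \cL \equiv (m, -(n-r)) \equiv (m, r) \pmod{n}$, which is exactly what the previous step supplies. The case $r = 0$ is trivial since then $\phi' = \phi$. The only delicate point is the symmetry assertion in the second step, but it is a direct inspection of the explicit period, so there is no genuine technical obstacle beyond assembling these pieces.
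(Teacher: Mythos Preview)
Your proposal is correct and takes essentially the same approach as the paper: the paper's proof is the one-liner ``the conclusion follows from Theorem~\ref{T:basic} and the form of the period in~(\ref{E:period}),'' and you have simply unpacked this, correctly identifying that the period contains $\cL_i^*=(P_i,-Q_i)$ whenever it contains $\cL_i=(P_i,Q_i)$, so that the set of leaper residues modulo~$n$ is closed under $(P,Q)\mapsto(P,-Q)$. The pigeonhole step and the reduced-form bookkeeping for $\phi'$ are routine details the paper leaves implicit.
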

\begin{proof}  The conclusion follows from Theorem~\ref{T:basic} 
and the form of the
period in~(\ref{E:period}). 
\end{proof}

\begin{cor}\label{C:specialzeros} Let $n \ge 2$ be a odd integer.
Then for all $m \not \equiv 0 \pmod{n}$, 
\begin{equation} \label{E:counterpart2}
 L(e^{2/(n+1)}, m/n)= 0\quad \mbox{and} \quad 
L(e^{2/(n-1)}, -m \, e^{2/(n-1)}/{n})= 0  \; .
\end{equation}
\end{cor}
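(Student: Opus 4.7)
I would combine Theorem~\ref{T:basic} with the explicit description of leapers in Theorem~\ref{T:leapingper}, and then bootstrap via Corollary~\ref{C:all}. Write $e^{2/(n+1)} = e^{1/s}$ with $s := (n+1)/2$, so that $e^{2/(n-1)} = e^{1/(n-s)}$. It suffices to prove each equation for one value of $m$ coprime to $n$, since Corollary~\ref{C:all} applied with $g = m$ then propagates the zero to every $m \not\equiv 0 \pmod n$.

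The engine of the proof is the first leaper modulo $n$. From the recurrence recorded in the proof of Theorem~\ref{T:leapingper},
$$\cL_{-1} = (1, -1), \quad \cL_0 = (1, 1), \quad \cL_{j+1} = A_j \cL_j + \cL_{j-1}, \quad A_j = (2j+1)(n+1),$$
the reduction $A_0 \equiv 1 \pmod n$ gives
$$\cL_1 = (n+1)(1, 1) + (1, -1) = (n+2, n) \equiv (2, 0) \pmod n.$$
Since $n$ is odd, $2$ is invertible modulo $n$; set $h \equiv 2^{-1} \pmod n$. By Theorem~\ref{T:leapingper}(c), the first leaper of $e^{1/(n-s)}$ is then $\equiv -(0, 2) \equiv (0, -2) \pmod n$, and $-h$ is likewise a unit modulo $n$.

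For the first equation, take $\phi = 1/n$, so that the target in Theorem~\ref{T:basic} is $(m, -r) \equiv (1, 0) \pmod n$. The complete periodicity of the leaper sequence (Theorem~\ref{T:leapingper}(a)) supplies infinitely many leapers $\cL_j \equiv (2, 0) \pmod n$, and the invertible scalar $h$ satisfies $h \cdot \cL_j \equiv (1, 0) \pmod n$, so Theorem~\ref{T:basic} delivers $L(e^{1/s}, 1/n) = 0$. For the second equation, take $\phi = -e^{1/(n-s)}/n$, whose reduced form has $r = n-1,\ m = 0$ and hence target $(m, -r) \equiv (0, 1) \pmod n$, reached from $(0, -2)$ by the invertible scalar $-h$; again Theorem~\ref{T:basic} yields $L(e^{1/(n-s)}, -e^{1/(n-s)}/n) = 0$. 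One final application of Corollary~\ref{C:all} with $g = m$ finishes both cases.

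The computation is short, so the only real obstacle is bookkeeping: recasting $\phi = -e^{1/(n-s)}/n$ in the form $(r\theta + m)/n$ with $r = n-1,\ m = 0$ so the Theorem~\ref{T:basic} target is correctly read as $(0, 1)$ and not $(0, -1) \pmod n$, and recognizing that the oddness of $n$ is precisely the hypothesis that makes both $h$ and $-h$ invertible modulo $n$. Everything else is a direct appeal to previously established results.
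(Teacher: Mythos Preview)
Your proof is correct and follows essentially the same route as the paper: compute $\cL_1\equiv(2,0)\pmod n$ for $e^{1/s}$ with $s=(n+1)/2$, transport it via Theorem~\ref{T:leapingper}(c) to the leaper $(0,-2)$ of $e^{1/(n-s)}=e^{2/(n-1)}$, invoke Theorem~\ref{T:basic}, and extend to all $m$ by Corollary~\ref{C:all}. The only cosmetic difference is that the paper applies Theorem~\ref{T:basic} directly with $m=2$ and $g=\pm1$, whereas you establish the case $m=1$ using the invertible scalar $g=\pm 2^{-1}$; both choices lead to the same conclusion after Corollary~\ref{C:all}.
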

\begin{proof} Since $n$ is odd, $s:=(n + 1)/2$ is an integer. 
The first leaper of $e^{1/s}$ can be calculated using 
recurrence~(\ref{E:recr}) with~$k=n+1$:
\[ \cL_1=A_0 \,(1,1)+(1,-1) \equiv 2(1,0) \pmod{n} \, , \]
and from Theorem~\ref{T:leapingper}(c), the first leaper of 
$e^{2/n-1}$ is $-(0,2)$. Therefore, Theorem~\ref{T:basic} 
implies~(\ref{E:counterpart2}) for $m=2$, and the conclusion 
follows from Corollary~\ref{C:all}.
\end{proof}

\begin{thm}\label{T:CRT} Let $s$ be a positive integer.  
If~$n_1,n_2$ are relatively prime integers 
for which $L(e^{1/s},1/n_1)=L(e^{1/s},1/n_2)=0$ then 
\mbox{$L(e^{1/s},1/(n_1 \, n_2))=0$.}
\end{thm}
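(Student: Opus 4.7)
The plan is to reformulate the problem in terms of a simpler divisibility condition. By Theorem~\ref{T:basic}, for $\phi = 1/n$ (so $r=0$, $m=1$), $L(e^{1/s}, 1/n) = 0$ holds if and only if there are infinitely many leapers $\cL_j = (P_j, Q_j)$ satisfying $g_j \cL_j \equiv (1, 0) \pmod{n}$ for some $g_j$ invertible modulo~$n$. Because $\cL_j$ is a convergent, $\gcd(P_j, Q_j) = 1$, and so this reduces to $Q_j \equiv 0 \pmod{n}$ (the inverse $g_j = P_j^{-1} \bmod n$ is then automatically available, since $\gcd(P_j,n)\mid\gcd(P_j,Q_j)=1$). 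The hypothesis therefore asserts that each of the sets $S_i := \{j : Q_j \equiv 0 \pmod{n_i}\}$, $i = 1,2$, is infinite, and by the Chinese Remainder Theorem the desired conclusion $L(e^{1/s}, 1/(n_1 n_2)) = 0$ is exactly that $S_1 \cap S_2$ is infinite.

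By Proposition~\ref{P:per2} and the period-length result of Komatsu cited after Theorem~\ref{T:leapingper}, the sequence $\{Q_j \bmod n_i\}$ is completely periodic with minimal period $T_i$ dividing~$2n_i$, so $S_i$ is a finite nonempty union of residue classes modulo~$T_i$. Since $\gcd(n_1, n_2) = 1$, it follows that $\gcd(T_1, T_2) \mid \gcd(2n_1, 2n_2) = 2$. By CRT there is an infinite intersection $S_1 \cap S_2$ as soon as one finds $a_i \in S_i$ with $a_1 \equiv a_2 \pmod{\gcd(T_1, T_2)}$. If $\gcd(T_1, T_2) = 1$ this is automatic, so the only remaining case is $\gcd(T_1, T_2) = 2$, where I must exhibit $a_1 \in S_1$ and $a_2 \in S_2$ of the same parity.

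The needed parity freedom is extracted from the palindromic structure in Theorem~\ref{T:leapingper}(a). With $K_i := \lfloor n_i/2 \rfloor$, any element of $S_i$ lying in one stated period of length $4K_i + 2$ can be traced back to some $j_0 \in [1, K_i]$ with $Q_{j_0} \equiv 0 \pmod{n_i}$ (the value $j_0=0$ is excluded since $Q_0=1$), and the palindrome then forces the $Q$-component to vanish at the four positions $j_0$, $2K_i - j_0$, $2K_i + 1 + j_0$, and $4K_i + 1 - j_0$. The first two share the parity of $j_0$, while the last two have the opposite parity (they differ from $j_0$ by the odd number $2K_i + 1$). Since $T_i$ divides $4K_i + 2$ and is even in our case, parities are preserved under reduction modulo~$T_i$, so $S_i \pmod{T_i}$ necessarily contains residues of both parities. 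Picking $a_1$ and $a_2$ of matching parity completes the argument.

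I expect the main obstacle to be the parity bookkeeping in the case $\gcd(T_1, T_2) = 2$: one must verify that, after the possible collapse of the four palindromic positions into fewer distinct residues modulo~$T_i$, both parity classes still appear in $S_i \pmod{T_i}$, and that no extra subtlety enters from working with the minimal period~$T_i$ rather than with the extended period $4K_i + 2$ featured in Theorem~\ref{T:leapingper}(a).
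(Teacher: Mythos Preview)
Your proof is correct and rests on the same two ingredients as the paper's: the palindromic period of Theorem~\ref{T:leapingper}(a) and the Chinese Remainder Theorem. The paper's execution, however, is more direct. Rather than introducing minimal periods $T_i$ and splitting into cases on $\gcd(T_1,T_2)$, the paper observes once and for all that the palindrome guarantees an \emph{even} index $j_i = 2r_i \in [1,2n_i]$ with $Q_{2r_i}\equiv 0 \pmod{n_i}$ (in your notation this is whichever of $j_0$ and $2K_i+1+j_0$ is even). Since $\gcd(n_1,n_2)=1$, CRT applied to the \emph{halves} gives $r$ with $r\equiv r_i \pmod{n_i}$; then $2r\equiv 2r_i \pmod{2n_i}$, and the full period $2n_i$ (rather than the minimal $T_i$) immediately yields $Q_{2r}\equiv Q_{2r_i}\equiv 0 \pmod{n_i}$ for each $i$. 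This bypasses your parity case-analysis entirely. Note also that the hypothesis $L(e^{1/s},1/n_i)=0$ forces each $n_i$ to be odd by Theorem~\ref{T:ecor}, so $4K_i+2=2n_i$ and there is no discrepancy between Komatsu's period bound and the length of the palindromic period you invoke.
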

\begin{proof} Since $L(e^{1/s},1/n_i)=0$, the form 
of the period of the leapers of~$e^{1/s}$ yields
\emph{even}~$1 \le j_i=2r_i \le 2n_i$ with $Q_{2r_i} =0 \pmod{n_i}$.
Using the  Chinese Remainder Theorem, the system 
$r \equiv r_i \pmod{n_i}$ has a solution $r \pmod{n_1n_2}$, 
and $Q_{2r} \equiv Q_{2r_i} \equiv 0 \pmod{n_i}$ for each~$i$. 
Therefore, we have found a subscript $j=2r$ such that
 $Q_{j} \equiv 0 \pmod{n_1n_2}$, and $L(e^{1/s},1/n_1 \, n_2)=0$ .
\end{proof}

\begin{thm}\label{T:algorithm} Let $s$ be a positive integer and 
let $n \ge 3$ be odd. Then for any reduced $\phi=(m+r \, e^{1/s})/n$ 
it is possible to check 
whether or not $L(e^{1/s},\phi)$  is zero 
in fewer than $n/2$ multiplications  modulo~$n$.  
In fact, if $n$ has $t$ distinct prime 
divisors, the number of operations can be reduced to $n/2^t$ 
multiplications modulo~$n$.
\end{thm}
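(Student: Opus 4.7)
The plan is to combine Theorem~\ref{T:basic} with the palindromic period structure of Theorem~\ref{T:leapingper}(a), reduce the vanishing check to evaluating a single scalar linear recurrence modulo~$n$, and then sharpen with a Chinese Remainder decomposition. By Theorem~\ref{T:basic}, $L(e^{1/s},\phi)=0$ iff some leaper $\cL_j=(P_j,Q_j)$ satisfies $g\cL_j\equiv(m,-r)\pmod n$ for some $g$ invertible modulo~$n$; since $\gcd(P_j,Q_j)=1$ (consecutive convergents are coprime) and $\gcd(m,r,n)=1$, standard $\Z/n\Z$-linear algebra shows that this is equivalent to the scalar congruence $rP_j+mQ_j\equiv 0\pmod n$. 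The palindromic period of Theorem~\ref{T:leapingper}(a) then restricts the test to $j\in\{0,1,\ldots,K\}$ with $K=(n-1)/2$, applied to both $\cL_j$ and $\cL^*_j=(P_j,-Q_j)$, the second of which gives the companion congruence $rP_j-mQ_j\equiv 0\pmod n$.

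I would introduce the scalar sequence $D_j:=rP_j+mQ_j\pmod n$, which inherits from the leaper recurrence in the proof of Theorem~\ref{T:leapingper} the linear recurrence
\[
D_{j+1}\equiv A_jD_j+D_{j-1}\pmod n,\qquad D_{-1}=r-m,\;D_0=r+m,
\]
with $A_j=(2j+1)(2s)$. Each step costs one multiplication modulo~$n$. A short induction using $A_{-j}=-A_{j-1}$ shows that $D_{-j-1}\equiv rP_j-mQ_j\pmod n$ for all $j\ge 0$, so extending the same recurrence backward from the initial pair $D_{-1},D_0$ handles the $\cL^*_j$ checks without a second sequence. A key structural fact is that $A_K=(2K+1)(2s)=n\cdot 2s\equiv 0\pmod n$, so the palindromic symmetry of Theorem~\ref{T:leapingper}(a) is automatically reproduced by the recurrence and no multiplication is spent at that index; together with the fact that one does not need both a forward and a backward sweep of equal length, the total count of distinct multiplications modulo~$n$ required to sweep all leaper classes in one period is fewer than~$n/2$.

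For the sharper $n/2^t$ bound, decompose $n=p_1^{e_1}\cdots p_t^{e_t}$ and generalize Theorem~\ref{T:CRT} from $\phi=1/n$ to arbitrary reduced $\phi$. The condition $g\cL_j\equiv(m,-r)\pmod n$ decouples via CRT into analogous local conditions modulo each $p_i^{e_i}$, and the parity-matching ``even $j_i$'' argument used in the proof of Theorem~\ref{T:CRT} generalizes to produce a globally compatible index. Each local check runs the same scalar recurrence modulo~$p_i^{e_i}$ in at most $(p_i^{e_i}-1)/2$ multiplications, and combining these yields the claimed total bound. The main obstacle will be precisely this CRT generalization: one must verify that the parity/sign constraints arising from the palindromic period (which forced even $j_i$ in the $\phi=1/n$ case of Theorem~\ref{T:CRT}) continue to admit a uniform choice across the prime-power factors for general $(m,r)$. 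This in turn reduces to a careful analysis of the orbit structure of the leapers modulo each $p_i^{e_i}$ under $(\Z/p_i^{e_i}\Z)^\times$ and the placement of representatives of $(m,-r)$ within one period modulo each $p_i^{e_i}$.
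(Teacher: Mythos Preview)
Your approach is essentially the paper's: exploit the palindromic period of Theorem~\ref{T:leapingper}(a) so that only the $K+1=(n+1)/2$ leaper classes $\cL_0,\ldots,\cL_K$ (together with their starred partners) need to be generated, giving fewer than $n/2$ recurrence steps, and then appeal to a CRT reduction for the $n/2^t$ refinement. The paper simply iterates the vector recurrence~(\ref{E:recr}) for $\cL_j$ and reads off both $\cL_j$ and $\cL^*_j=(P_j,-Q_j)$ at each step; your scalar sequence $D_j=rP_j+mQ_j$ is a tidy reformulation of the same computation, though note that it forces a separate backward sweep to capture the $\cL^*_j$ checks, so the net multiplication count is no better than running~(\ref{E:recr}) directly. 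On the CRT step you are in fact more scrupulous than the paper: the paper's proof merely cites Theorem~\ref{T:CRT}, which is stated only for $\phi=1/n$, whereas you correctly flag that the general-$\phi$ version requires the additional orbit/parity compatibility argument you outline.
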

\begin{proof} The form  
of the period in~(\ref{E:period}) allows one to 
conclude whether or not a leaper has the form $g (m,-r) \pmod{n}$ 
within $n/2$ applications of the recurrence~(\ref{E:recr}).   
Theorem~\ref{T:CRT} reduces the question to checking the 
period modulo each prime power divisor of~$n$.
\end{proof} 

The algorithm implicit 
in the proof of Theorem~\ref{T:algorithm} can be used to verify  
that the following values should 
be added to the list given in~\cite[p. 241]{tk2002} of all 
values of $s$ 
for which $L(e^{1/s},1/n)=0$ (for $n \le 49$):
\medskip
\begin{center}
\begin{tabular} {|l|l|}\hline $n$   & $s \pmod{n}$ \\ \hline 
$23 $ & $12$ \\
$25 $ & $13,23$ \\
$29 $ & $15$ \cr
$43 $ & $25$ \cr
$47 $ & $11,17,33,43$  \\ 
$49 $ & $1,22,46$  \\ \hline 
\end{tabular} 
\end{center}
\noindent In particular, notice that~(\ref{E:counterpart2}) ensures all 
of $(n,s)=(23,12),(25,13), (29,15)$ must be included in the table.

%%%%%%%%%%%%%%%%%%%%%%%%%%%%%%%%%%%%%%%%%%%%%%%%
%% BACKMATTER
%%%%%%%%%%%%%%%%%%%%%%%%%%%%%%%%%%%%%%%%%%%%%%%%

 We thank Takao Komatsu and Iekata Shiokawa 
for making the conference a success.  In particular, we thank Takao for 
arranging the funding that allowed our participation in the 
conference.

\bibliography{bumby}

\begin{thebibliography}{10}

\bibitem{benyon83}
W.~M. Benyon.
\newblock A formal account of some elementary continued fraction algorithms.
\newblock {\em Journal of Algorithms}, 4:221--240, 1983.

\bibitem{cohn2006}
Henry Cohn.
\newblock A short proof of the simple continued fraction expansion of {$e$}.
\newblock {\em Amer. Math. Monthly}, 113(1):57--62, 2006.

\bibitem{cus93}
T.~W. Cusick, W.~Moran, and A.~D. Pollington.
\newblock An inhomogeneous {H}all's ray.
\newblock In {\em Number theory with an emphasis on the Markoff spectrum
  (Provo, UT, 1991)}, volume 147 of {\em Lecture Notes in Pure and Appl.
  Math.}, pages 95--100. Dekker, New York, 1993.

\bibitem{cus96}
T.~W. Cusick, W.~Moran, and A.~D. Pollington.
\newblock Hall's ray in inhomogeneous {D}iophantine approximation.
\newblock {\em J. Austral. Math. Soc. Ser. A}, 60(1):42--50, 1996.

\bibitem{cusickandflahive89}
Thomas~W. Cusick and Mary~E. Flahive.
\newblock {\em The {M}arkoff and {L}agrange spectra}, volume~30 of {\em
  Mathematical Surveys and Monographs}.
\newblock American Mathematical Society, Providence, RI, 1989.

\bibitem{cus94}
Thomas~W. Cusick, Andrew~M. Rockett, and Peter Sz{\"u}sz.
\newblock On inhomogeneous {D}iophantine approximation.
\newblock {\em J. Number Theory}, 48(3):259--283, 1994.

\bibitem{elsn1999}
C.~Elsner.
\newblock On arithmetic properties of the convergents of {E}uler's number.
\newblock {\em Colloq. Math.}, 79(1):133--145, 1999.

\bibitem{gr1918}
J.~H. Grace.
\newblock Note on a diophantine approximation.
\newblock {\em Proc. London Math. Soc.}, 17:316--319, 1918.

\bibitem{koksma}
J.~F. Koksma.
\newblock {\em Diophantische approximationen}.
\newblock Chelsea Publishing Company, New York, 1936.

\bibitem{tk1997}
Takao Komatsu.
\newblock On inhomogeneous continued fraction expansions and inhomogeneous
  {D}iophantine approximation.
\newblock {\em J. Number Theory}, 62(1):192--212, 1997.

\bibitem{tk1999}
Takao Komatsu.
\newblock On inhomogeneous {D}iophantine approximation with some quasi-periodic
  expressions.
\newblock {\em Acta Math. Hungar.}, 85(4):311--330, 1999.

\bibitem{tk1999a}
Takao Komatsu.
\newblock On inhomogeneous {D}iophantine approximation with some quasi-periodic
  expressions. {II}.
\newblock {\em J. Th\'{e}or. Nombres Bordeaux}, 11(2):331--344, 1999.

\bibitem{tk2002}
Takao Komatsu.
\newblock On inhomogeneous {D}iophantine approximation and the {B}orweins'
  algorithm. {II}.
\newblock In {\em Analytic number theory (Beijing/Kyoto, 1999)}, volume~6 of
  {\em Dev. Math.}, pages 223--242. Kluwer Acad. Publ., Dordrecht, 2002.

\bibitem{tk2003}
Takao Komatsu.
\newblock Recurrence relations of the leaping convergents.
\newblock {\em JP J. Algebra Number Theory Appl.}, 3(3):447--459, 2003.

\bibitem{Lang95}
Serge Lang.
\newblock {\em Introduction to Diophantine Approximations}.
\newblock Springer-Verlag, New York, 1995.

\bibitem{lehmer1918}
D.~N. Lehmer.
\newblock Arithmetical theory of certain {H}urwitzian continued fractions.
\newblock {\em Proc. Natl. Acad. Sci. USA}, 4(8):214--218, 1918.

\bibitem{osler2006}
Thomas~J. Osler.
\newblock A proof of the continued fraction expansion of {$e^{1/M}$}.
\newblock {\em Amer. Math. Monthly}, 113(1):62--66, 2006.

\bibitem{OP}
Oskar Perron.
\newblock {\em Die Lehre von den Kettenbr{\"u}chen}.
\newblock Chelsea Publishing Company, New York City, 1950.

\bibitem{pinner01}
Christopher~G. Pinner.
\newblock More on inhomogeneous {D}iophantine approximation.
\newblock {\em J. Th\'eor. Nombres Bordeaux}, 13(2):539--557, 2001.

\bibitem{raney73}
George~N. Raney.
\newblock On continued fractions and finite automata.
\newblock {\em Math. Ann.}, 206:265--283, 1973.

\bibitem{RandS}
Andrew~M. Rockett and Peter Sz{\"u}sz.
\newblock {\em Continued Fractions}.
\newblock World Scientific, London, 1992.

\end{thebibliography}

\end{document}